\theoremstyle{plain}
\newtheorem{theorem}{Theorem}[section]
\newtheorem{lemma}[theorem]{Lemma}
\newtheorem{proposition}[theorem]{Proposition}
\newtheorem{conjecture}[theorem]{Conjecture}
\theoremstyle{definition}
\newtheorem{problem}[theorem]{Problem}
\newcommand{\rhat}{\hat{r}}
\newcommand{\rc}{{\hat{r}}_c}
\begin{document}

\title[Size Ramsey numbers for matchings]{Two types of size Ramsey numbers for matchings of small order}
\author[V. Vito \and D. R. Silaban]{Valentino Vito \and Denny Riama Silaban}

\address{Department of Mathematics\\
Faculty of Mathematics and Natural Sciences (FMIPA)\\
Universitas Indonesia\\
Depok 16424\\
Indonesia}
\email{valentino.vito@sci.ui.ac.id\\
denny@sci.ui.ac.id}

\subjclass[2010]{Primary 05C55; Secondary 05D10}
\keywords{Size Ramsey number, connected size Ramsey number, matching, cycle, path}

\begin{abstract}
For simple graphs $G$ and $H$, their size Ramsey number $\hat{r}(G,H)$ is the smallest possible size of $F$ such that for any red-blue coloring of its edges, $F$ contains either a red $G$ or a blue $H$. Similarly, we can define the connected size Ramsey number ${\hat{r}}_c(G,H)$ by adding the prerequisite that $F$ must be connected. In this paper, we explore the relationships between these size Ramsey numbers and give some results on their values for certain classes of graphs. We are mainly interested in the cases where $G$ is either a $2K_2$ or a $3K_2$, and where $H$ is either a cycle $C_n$ or a union of paths $nP_m$. Additionally, we improve an upper bound regarding the values of $\hat{r}(tK_2,P_m)$ and ${\hat{r}}_c(tK_2,P_m)$ for certain $t$ and $m$.
\end{abstract}

\maketitle

\section{Introduction}

Let $F$, $G$ and $H$ be simple graphs. We write $v(F)$ and $e(F)$ to denote the order and size of $F$, respectively. We write $F\to (G,H)$ if for every red-blue coloring of edges in $F$, there exists either a red $G$ or a blue $H$ in $F$. If $F\to (G,H)$, we say that $F$ is an \emph{arrowing graph} of $G$ and $H$. A \emph{$(G,H)$-coloring} of $F$ is a red-blue coloring of edges in $F$ such that $F$ contains neither a red $G$ nor a blue $H$. Thus $F\not\to (G,H)$ means that $F$ admits a $(G,H)$-coloring.

The \emph{size Ramsey number} of $G$ and $H$, denoted by $\rhat(G,H)$, is defined as the smallest possible size of a graph $F$ such that $F\to (G,H)$ holds \cite{Erd78}. A variant of the size Ramsey number, the \emph{connected size Ramsey number} $\rc(G,H)$ of $G$ and $H$, is the smallest possible size of a connected graph $F$ such that $F\to (G,H)$ holds \cite{Rah15}. Both of these numbers are considered in this paper, and we try to explore some relationships between them.

We focus on \emph{matchings} $G=tK_2$ whose order $2t$ is small. More precisely, we limit ourselves to the cases where $G$ is either a $2K_2$ or a $3K_2$, and where $H$ is either a cycle $C_n$ or a union of paths $nP_m$. We shall see that this allows us to obtain nontrivial results involving large order matchings, as Theorem \ref{r(tK_2,P_m) sharp} illustrates. For some previous research discussing other cases of $G$ and $H$, see \cite{Ben12,Lor98,Rah16,Rah17}.

In previous studies, Erd\H{o}s and Faudree \cite{Erd81} discussed the size Ramsey numbers of graphs involving matchings $tK_2$, $t \ge 1$. They showed that \[\rhat(tK_2,P_4)=\left\lceil \frac{5t}{2}\right\rceil \quad \text{and} \quad \rhat(tK_2,P_5)=\begin{cases}
3t, & t \text{ even,} \\ 3t+1, & t \text{ odd.}
\end{cases}\]
Furthermore, they proved that there is a constant $c$ depending on $t$ such that $\rhat(tK_2,C_n)\le n+c\sqrt{n}$. Silaban \textit{et al.} \cite{SilPP}, along with Vito \textit{et al.} \cite{Vit21} provided a proof that $\rhat(2K_2,P_m)=\rc(2K_2,P_m)=m+1$ and that $C_{m+1}\to (2K_2,P_m)$.

In \cite{Ass19}, Assiyatun \textit{et al.} managed to prove that $\rc(2K_2,2P_m)=2m+1$. We are able to generalize this result to $\rc(2K_2,nP_m)=nm+1$. This result motivates us to work on the value of $\rhat(2K_2,nP_m)$ by applying our knowledge of the value of $\rc(2K_2,nP_m)$.

On the other hand, Rahadjeng \textit{et al.} \cite{Rah15} computed the exact values of $\rc(tK_2,P_4)$ for small $t$, and they showed that
\[\rc(tK_2,P_4)\le\begin{cases}
3t-1, & t \text{ even,} \\ 3t, & t \text{ odd.}
\end{cases}\]
They further claimed that $\rc(2K_2,C_n)=2n$ for $n \ge 4$. However, if $n$ is sufficiently large, we are able to construct a connected arrowing graph $F$ of $2K_2$ and $C_n$ such that $e(F)<2n$, refuting this claim.

For this paper, we determine the exact value of $\rc(2K_2,nP_m)$ and an upper bound for $\rhat(2K_2,nP_m)$ for $n\ge 1$ and $m\ge 3$. We also give exact values of $\rhat(2K_2,nP_m)$ for small values of $n$. Furthermore, we consider the connected size Ramsey numbers for pairs of graphs $(3K_2,P_m)$ and $(2K_2,C_n)$. We also manage to sharpen a previous upper bound \cite{Vit21} for both $\rhat(tK_2,P_m)$ and $\rc(tK_2,P_m)$ when $t \ge 3$ is odd and $m \ge 9$.

\section{Preliminaries}

It is easy to see that $\rhat(G,H) \le \rc(G,H)$. We can also see that if $F_1\to (G_1,H)$ and $F_2\to (G_2,H)$, then their disjoint union satisfies $F_1+F_2\to (G_1+G_2,H)$. By induction, we have that $F\to (G,H)$ implies $tF\to (tG,H)$, $t\ge 1$. In addition, we have the following lemma.

\begin{lemma}[\cite{Sil20}]\label{2K_2}
Let $H$ be a graph. Then $F\to (2K_2,H)$ holds if and only if the following conditions are satisfied:
\begin{enumerate}
  \item $H \subseteq F-v$ for every $v \in V(F)$ and
  \item $H \subseteq F-C_3$ for every $C_3$ in $F$.
\end{enumerate}
\end{lemma}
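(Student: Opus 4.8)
My approach is to prove the contrapositive of each direction, leaning on the classical description of graphs with no two independent edges. The key preliminary fact I would record first is this: \emph{if a graph contains no $2K_2$, then it is a subgraph of some star $K_{1,k}$ or else it is a triangle $K_3$.} This follows from a short case analysis — any two edges must meet, so if edges $uv$ and $vw$ share $v$ while some third edge omits $v$, that edge must be $uw$, and one then checks that no further edge can be adjoined without creating a $2K_2$, forcing the graph to be the triangle on $\{u,v,w\}$; otherwise all edges pass through a common vertex. With this in hand, the lemma is essentially a translation of the statement ``the red subgraph contains no $2K_2$'' into the two listed conditions.

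For the implication $F\to(2K_2,H)\Rightarrow$ (1) and (2), I would test $F$ against two explicit colourings. To obtain (1), fix $v\in V(F)$, colour red every edge incident with $v$, and colour all remaining edges blue; the red subgraph is a star, hence contains no red $2K_2$, so by hypothesis there is a blue copy of $H$. Since the blue edges are exactly those of $F-v$, this gives $H\subseteq F-v$. To obtain (2), fix a triangle $C_3$ in $F$ on vertices $a,b,c$, colour its three edges $ab,bc,ca$ red and everything else blue; a triangle also contains no $2K_2$, so a blue $H$ must again appear, and now the blue subgraph is $F$ with the three edges of $C_3$ deleted, giving $H\subseteq F-C_3$.

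For the converse, assume (1) and (2) hold and take an arbitrary red-blue colouring of $E(F)$ with no red $2K_2$; let $R$ be its red subgraph. By the preliminary fact, either every red edge meets a common vertex $v$, or $R$ is a triangle $C_3\subseteq F$ (the cases where $R$ has at most one edge being absorbed into the first alternative). In the first case every edge of $F-v$ is blue, so the copy of $H$ guaranteed by (1) is entirely blue; in the second case every edge of $F$ outside $\{ab,bc,ca\}$ is blue, so the copy of $H$ guaranteed by (2) is entirely blue. Hence every colouring avoiding a red $2K_2$ contains a blue $H$, i.e.\ $F\to(2K_2,H)$.

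I do not anticipate a genuine obstacle: the whole content sits in the structural description of $2K_2$-free graphs, which is elementary. The one point demanding care is the triangle case — the deletion in condition (2) is of the three \emph{edges} of $C_3$, not its vertices, since a blue $H$ may still pass through $a$, $b$, or $c$ via their remaining blue edges — together with checking that the degenerate red configurations (no red edge, or a single red edge) are correctly subsumed under the star alternative.
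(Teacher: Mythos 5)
Your proof is correct. The paper itself states Lemma \ref{2K_2} without proof, citing \cite{Sil20}, but your argument — explicit star and triangle colourings for the forward direction, and the classification of $2K_2$-free graphs as stars or triangles for the converse — is the standard one and runs exactly parallel to the paper's own proof of the analogous Lemma \ref{3K_2}. Your closing remark that $F-C_3$ must be read as deleting the three \emph{edges} of the triangle (not its vertices) is the right reading and is the one point where a careless version of the lemma would fail.
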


In our main results, Lemma \ref{2K_2} is applied to prove that, given a graph $H$, $F\to (2K_2,H)$ is satisfied for some graph $F$. We also consider the following theorem which provides some upper bounds for the values of $\rhat(tK_2,P_m)$ and $\rc(tK_2,P_m)$.

\begin{theorem}[\cite{Vit21}]\label{r(tK_2,P_m)}
For $t\ge 1$, $m\ge 3$,\[\rhat(tK_2,P_m)\le\begin{cases}
\frac{t(m+1)}{2}, & t \text{ even,} \\ \frac{(t+1)(m+1)}{2}-2, & t \text{ odd.}\end{cases}\] 
and
\[\rc(tK_2,P_m)\le\begin{cases}
\frac{t(m+2)}{2}-1, & t \text{ even,} \\ \frac{(t+1)(m+2)}{2}-3, & t \text{ odd.}\end{cases}\] 
\end{theorem}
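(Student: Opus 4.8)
The plan is to produce small arrowing graphs by gluing together two elementary building blocks: the cycle $C_{m+1}$, which arrows $(2K_2,P_m)$, and the path $P_m$, which arrows $(K_2,P_m)$. First I would verify $C_{m+1}\to(2K_2,P_m)$ straight from Lemma~\ref{2K_2}. Deleting any vertex from $C_{m+1}$ leaves a graph isomorphic to $P_m$, so condition (1) is satisfied; and since $m\ge 3$, the cycle $C_{m+1}$ has girth at least $4$ and hence contains no $C_3$, so condition (2) holds vacuously. The claim $P_m\to(K_2,P_m)$ is immediate: a red $K_2$ is just a red edge, so the only coloring to worry about is the all-blue one, and that contains a blue $P_m$ exactly because $P_m\subseteq P_m$.

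Next I would combine these blocks using the disjoint-union principle recorded in Section~2 (if $F_1\to(G_1,H)$ and $F_2\to(G_2,H)$ then $F_1+F_2\to(G_1+G_2,H)$, and by induction $F\to(G,H)$ implies $tF\to(tG,H)$). When $t$ is even, take $F=\frac{t}{2}C_{m+1}$; then $F\to(tK_2,P_m)$ and $e(F)=\frac{t}{2}(m+1)=\frac{t(m+1)}{2}$, which is the first bound. When $t$ is odd, take $F=\frac{t-1}{2}C_{m+1}+P_m$; since $(t-1)K_2+K_2=tK_2$ we again get $F\to(tK_2,P_m)$, and
\[e(F)=\frac{t-1}{2}(m+1)+(m-1)=\frac{(t+1)(m+1)}{2}-2.\]

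For the connected bounds I would start from the graph $F$ above and make it connected as cheaply as possible: if $F$ has $c$ components, pick one vertex in each component and join these vertices by a path of $c-1$ new edges, obtaining a connected supergraph $F'\supseteq F$. Since passing to a supergraph preserves arrowing (restrict any coloring of $F'$ to $F$ and apply $F\to(tK_2,P_m)$), we still have $F'\to(tK_2,P_m)$. Here $c=\frac{t}{2}$ for $t$ even and $c=\frac{t+1}{2}$ for $t$ odd, so $e(F')=e(F)+c-1$, which simplifies to $\frac{t(m+2)}{2}-1$ and $\frac{(t+1)(m+2)}{2}-3$ respectively, giving the two connected bounds.

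I do not anticipate a genuine obstacle; the argument is elementary. The only points requiring attention are (i) confirming the hypotheses of Lemma~\ref{2K_2} for $C_{m+1}$, in particular that $m\ge 3$ is exactly what makes condition (2) vacuous, and (ii) the parity bookkeeping: the odd case uses one additional $P_m$ block, which contributes $m-1$ edges rather than $m+1$ and also raises the component count by one in the connected construction, and one must check that both effects are correctly reflected in the closed-form expressions.
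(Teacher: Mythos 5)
Your proof is correct, and it follows exactly the construction pattern the paper itself uses for the sharpened bound in Theorem~\ref{r(tK_2,P_m) sharp}: disjoint copies of $C_{m+1}$ (plus a $P_m$ in the odd case) via the additivity of arrowing, then one bridge per extra component, each bridge accounting for the shift from $m+1$ to $m+2$ per block. The arithmetic in both the disconnected and connected cases checks out, including the boundary cases $t=1$ and $t=2$.
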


The upper bounds of Theorem \ref{r(tK_2,P_m)} are sharpened for certain values of $t$ and $m$ in Theorem \ref{r(tK_2,P_m) sharp}.

\section{$2K_2$ versus $nP_m$}

It has previously been established that $\rhat(2K_2,P_m)=\rc(2K_2,P_m)=m+1$ \cite{SilPP}. Now, we consider pairs of graphs in the more general form of $(2K_2,nP_m)$ and present the following theorem as our first result.

\begin{theorem}\label{r(2K_2,nP_m)}
For $n\ge 1$, $m\ge 3$,
\[\rhat(2K_2,nP_m)\le \mathrm{min}\{nm+1,(n+1)(m-1)\}.\]
\end{theorem}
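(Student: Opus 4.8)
The plan is to prove the two inequalities $\rhat(2K_2,nP_m)\le nm+1$ and $\rhat(2K_2,nP_m)\le (n+1)(m-1)$ separately, by exhibiting in each case an arrowing graph of the stated size, and then take the minimum. For the first bound I would take $F_1=C_{nm+1}$, the cycle on $nm+1$ vertices, which has exactly $nm+1$ edges. For the second bound I would take $F_2=(n+1)P_m$, the disjoint union of $n+1$ copies of $P_m$, which has exactly $(n+1)(m-1)$ edges. In both cases the verification that $F_i\to(2K_2,nP_m)$ is carried out through Lemma \ref{2K_2}.

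For $F_1=C_{nm+1}$: since $m\ge 3$ and $n\ge 1$ we have $nm+1\ge 4$, so $F_1$ is triangle-free and condition (2) of Lemma \ref{2K_2} holds vacuously. For condition (1), deleting any vertex from $C_{nm+1}$ leaves the path $P_{nm}$; writing its vertices in order as $u_1u_2\cdots u_{nm}$ and partitioning them into the consecutive blocks $\{u_{(i-1)m+1},\dots,u_{im}\}$ for $i=1,\dots,n$ exhibits a copy of $nP_m$ inside $P_{nm}$. Hence $nP_m\subseteq F_1-v$ for every $v\in V(F_1)$, so Lemma \ref{2K_2} gives $F_1\to(2K_2,nP_m)$, whence $\rhat(2K_2,nP_m)\le nm+1$.

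For $F_2=(n+1)P_m$: it is a forest, so condition (2) is again vacuous. For condition (1), any vertex $v$ lies in exactly one of the $n+1$ path components, and deleting it leaves the remaining $n$ components untouched, so $F_2-v$ still contains $nP_m$. Thus $F_2\to(2K_2,nP_m)$, giving $\rhat(2K_2,nP_m)\le (n+1)(m-1)$. (Equivalently, and without invoking the lemma: in any red-blue coloring of $F_i$ containing no red $2K_2$ the red edges span a star or a triangle; since $\Delta(F_i)\le 2$ and $F_i$ is triangle-free, at most two red edges occur and they meet at a common vertex $w$, so the all-blue subgraph $F_i-w$ — which contains $nP_m$ by the computations above — yields a blue $nP_m$.)

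Combining the two constructions gives $\rhat(2K_2,nP_m)\le\min\{e(F_1),e(F_2)\}=\min\{nm+1,(n+1)(m-1)\}$, as claimed; here the first quantity is the smaller one exactly when $m\ge n+2$ and the second when $m\le n+1$. No step in this argument is genuinely difficult; the only points that need a little care are the explicit embedding $nP_m\subseteq P_{nm}$ and checking the triangle-free condition for $C_{nm+1}$ in the extreme case $n=1$, $m=3$.
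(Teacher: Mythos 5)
Your proposal is correct and follows essentially the same route as the paper: the two arrowing graphs $C_{nm+1}$ and $(n+1)P_m$ are exactly the ones used there, with the verification done via Lemma \ref{2K_2}. The extra details you supply (the explicit block embedding of $nP_m$ in $P_{nm}$ and the lemma-free star argument) are fine but not needed beyond what the paper already records.
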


\begin{figure}
\centering
\captionsetup{width=.3\textwidth}
\subfloat[The graph $F=C_{nm+1}$ satisfies $F\to (2K_2,nP_m)$.]{%
\includegraphics[width=.3\textwidth]{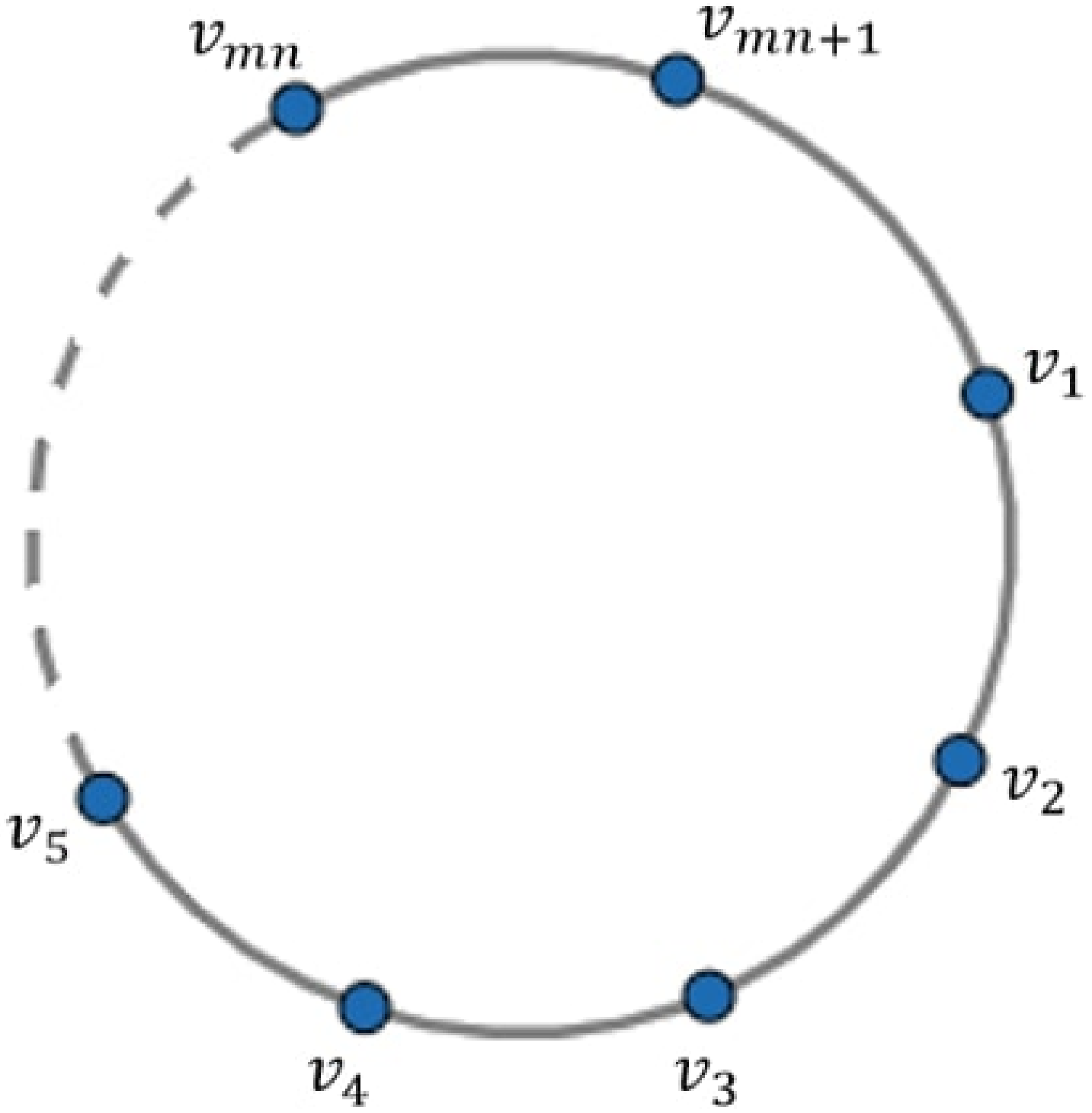}}\hspace{16pt}
\subfloat[The graph $G=(n+1)P_m$ satisfies $G\to (2K_2,nP_m)$.]{%
\includegraphics[width=.3\textwidth]{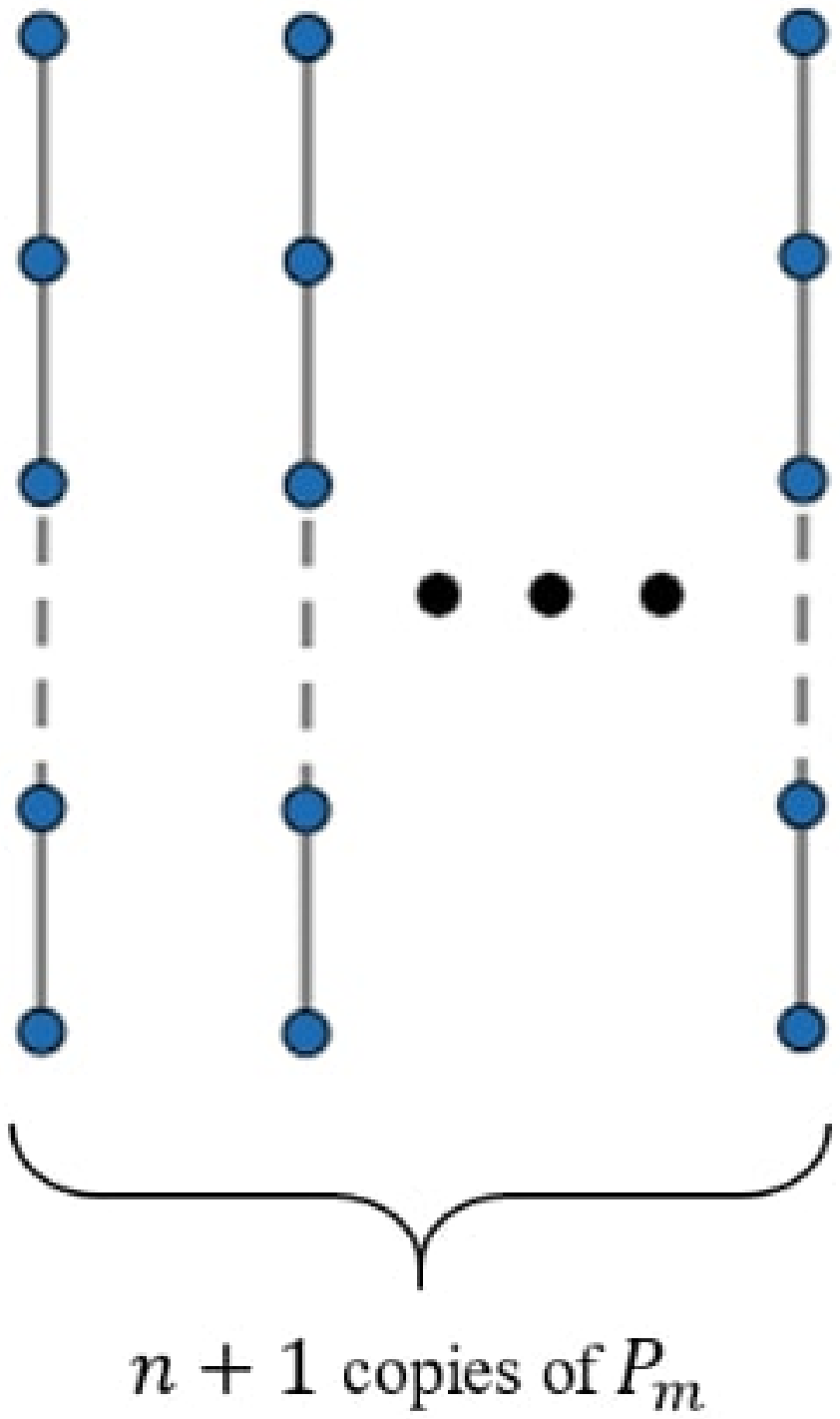}}
\captionsetup{width=.5\textwidth}
\caption{Arrowing graphs of $2K_2$ and $nP_m$.} \label{Graph12}
\end{figure}

\begin{proof}
We first show that $\rhat(2K_2,nP_m)\le nm+1$. Consider the graph $F=C_{nm+1}$ of size $nm+1$ as in Figure \ref{Graph12}(A). Notice that for every $v \in V(F)$, the graph $F-v$ is the path $P_{nm}$. Thus, we have $nP_m \subseteq P_{nm}=F-v$. Since $F$ does not contain a $C_3$, we conclude that $F\to (2K_2,nP_m)$ by Lemma \ref{2K_2}. It follows that $\rhat(2K_2,nP_m)\le nm+1$.

Now we show that $\rhat(2K_2,nP_m)\le (n+1)(m-1)$. Consider the graph $G=(n+1)P_m$ of size $(n+1)(m-1)$ as in Figure \ref{Graph12}(B). We see that for every $v \in V(G)$, the graph $G-v$ contains $nP_m$. Moreover, since $G$ does not contain a $C_3$, we conclude that $G\to (2K_2,nP_m)$ by Lemma \ref{2K_2}. It follows that $\rhat(2K_2,nP_m)\le (n+1)(m-1)$.
\end{proof}

While we only managed to provide an upper bound for $\rhat(2K_2,nP_m)$ in Theorem \ref{r(2K_2,nP_m)}, we are able to acquire the exact value of $\rc(2K_2,nP_m)$ in the following theorem.

\begin{theorem}\label{r_c(2K_2,nP_m)}
For $n\ge 1$, $m\ge 3$,
\[\rc(2K_2,nP_m)=nm+1.\]
\end{theorem}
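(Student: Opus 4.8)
The plan is to split the claim into the easy upper bound, which is essentially already done, and a short lower-bound argument based on Lemma \ref{2K_2}. For the upper bound, I would observe that the proof of Theorem \ref{r(2K_2,nP_m)} already produces the connected graph $C_{nm+1}$, which has $e(C_{nm+1})=nm+1$ and satisfies $C_{nm+1}\to(2K_2,nP_m)$; hence $\rc(2K_2,nP_m)\le nm+1$ with no extra work.

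For the lower bound I would argue by contradiction: suppose some connected graph $F$ with $e(F)\le nm$ satisfies $F\to(2K_2,nP_m)$. The first step is to pin down the structure of $F$. By condition (1) of Lemma \ref{2K_2}, $nP_m\subseteq F-v$ for every $v\in V(F)$, so $v(F)-1\ge v(nP_m)=nm$, i.e.\ $v(F)\ge nm+1$. Since $F$ is connected, $e(F)\ge v(F)-1\ge nm$, and combined with $e(F)\le nm$ this forces $e(F)=nm$ and $v(F)=nm+1$; thus $F$ is a tree on $nm+1$ vertices.

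The second step is to exhibit a vertex violating condition (1) of Lemma \ref{2K_2}. Pick any leaf $\ell$ of $F$ and let $u$ be its neighbor. Then $\ell$ is an isolated vertex of $F-u$. Since $v(F-u)=nm=v(nP_m)$, any subgraph copy of $nP_m$ inside $F-u$ would be spanning; but every vertex of $nP_m$ has degree at least $1$ (as $m\ge 3$), so $F-u$ cannot contain a spanning copy of $nP_m$, i.e.\ $nP_m\not\subseteq F-u$. This contradicts Lemma \ref{2K_2}, so no such $F$ exists and $\rc(2K_2,nP_m)\ge nm+1$. Together with the upper bound this gives $\rc(2K_2,nP_m)=nm+1$.

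I do not expect a serious obstacle here; the only points needing a little care are the structural deduction that $F$ must be a tree on exactly $nm+1$ vertices — which is where connectivity is used — and the remark that, because $nP_m$ and $F-u$ have the same number of vertices, a subgraph copy of $nP_m$ is automatically spanning, so the isolated vertex $\ell$ is a genuine obstruction to embedding $nP_m$.
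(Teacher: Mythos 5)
Your proof is correct. The upper bound is identical to the paper's (both just cite $C_{nm+1}\to(2K_2,nP_m)$ from Theorem \ref{r(2K_2,nP_m)}), and the lower bound shares the same core idea as the paper's --- exhibit a single vertex $u$ whose deletion destroys every copy of $nP_m$, which kills the arrowing property --- but the execution is genuinely different and, in one respect, tighter. The paper does not invoke Lemma \ref{2K_2} here at all: it takes an arbitrary connected $F$ of size $nm$, splits into the cases ``$F$ is a tree'' (choose $u$ adjacent to a leaf) and ``$F$ has a cycle'' (choose $u$ on a cycle), colors the edges at $u$ red and the rest blue, and bounds $v(F-u)$ by a components-plus-edges count. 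You instead use condition (1) of Lemma \ref{2K_2} twice: first to force $v(F)\ge nm+1$, which together with connectivity and $e(F)\le nm$ pins $F$ down as a tree on exactly $nm+1$ vertices (so the paper's cycle case never arises), and then to derive the contradiction from the isolated leaf in $F-u$. Your spanning-copy argument is also more careful than the paper's counting at the corresponding step: for a tree of size $nm$, $F-u$ has exactly $nm$ vertices (not at most $nm-1$, as the paper's ``at most $d-1$ components'' estimate would give, since deleting a degree-$d$ vertex from a tree leaves $d$ components), and it is really the isolated former leaf --- not a vertex deficit --- that obstructs a copy of $nP_m$. The trade-off is that your argument leans on Lemma \ref{2K_2} and the tree structure, whereas the paper's explicit coloring is self-contained and extends verbatim to non-tree graphs of size $nm$; both are valid, and yours is arguably the cleaner write-up.
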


\begin{proof}
By the proof of Theorem \ref{r(2K_2,nP_m)}, we see that $C_{nm+1}\to (2K_2,nP_m)$. Therefore, $\rc(2K_2,nP_m)\le nm+1$. We now show that $\rc(2K_2,nP_m)\ge nm+1$. Suppose that $F$ is a connected graph of size $nm$. We pick a vertex $u \in V(F)$ depending on whether $F$ contains a cycle. If $F$ is a tree, then we choose a vertex $u$ which is adjacent to a leaf vertex. Otherwise, we choose a vertex $u$ which is part of a cycle. Denote $d$ as the degree of $u$ in either case. 

Color every edge incident to $u$ red and $F-u$ blue. Clearly, $F$ contains no red $2K_2$. The graph $F-u$ contains at most $d-1$ components and $nm-d$ edges. It follows that $F-u$ contains at most $(nm-d)+(d-1)=nm-1$ vertices, and thus it is not possible for $F-u$ to contain an $nP_m$. Therefore, $F$ admits a $(2K_2,nP_m)$-coloring. Since $F$ is an arbitrary graph of size $nm$, we have $\rc(2K_2,nP_m)\ge nm+1$.
\end{proof}

We have found that finding the value of $\rc(2K_2,nP_m)$ is easier than finding the value of $\rhat(2K_2,nP_m)$. However, we can determine the exact values of $\rhat(2K_2,nP_m)$ for $n=2,3,4$ which turn out to be precisely the upper bound obtained in Theorem \ref{r(2K_2,nP_m)}. We note that Theorem \ref{r_c(2K_2,nP_m)} is applied to find these size Ramsey values.

\begin{proposition}\label{r(2K_2,2P_m)}
For $m\ge 3$,\[\rhat(2K_2,2P_m)=\mathrm{min}\{2m+1,3m-3\}=\begin{cases} 6, & m=3, \\ 2m+1, & m\ge 4.
\end{cases}\]
\end{proposition}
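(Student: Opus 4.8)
The upper bound is immediate from Theorem~\ref{r(2K_2,nP_m)} with $n=2$, which gives $\rhat(2K_2,2P_m)\le\min\{2m+1,3m-3\}$; since $3m-3\le 2m+1$ precisely when $m\le 4$, this minimum equals $6$ when $m=3$ and $2m+1$ when $m\ge 4$, as claimed. So the task is to prove the matching lower bounds, and in each case I would argue by contradiction starting from a hypothetical arrowing graph $F$ of size one below the target. The basic tool is Lemma~\ref{2K_2}: if $F\to(2K_2,2P_m)$ then $2P_m\subseteq F-v$ for every $v\in V(F)$, which forces both $e(F-v)\ge e(2P_m)=2m-2$ (a degree bound on $F$) and $v(F-v)\ge v(2P_m)=2m$ (so $v(F)\ge 2m+1$).

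For $m=3$ the argument is short. If $e(F)\le 5$, then $e(F-v)\ge 4$ for every $v$ forces $\deg_F(v)\le 1$, so $F$ is a matching together with isolated vertices; but then $F$ contains no $P_3$, contradicting $2P_3\subseteq F$. Hence $\rhat(2K_2,2P_3)\ge 6$, matching the upper bound.

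For $m\ge 4$, suppose $e(F)\le 2m$. Now $e(F-v)\ge 2m-2$ forces $\Delta(F)\le 2$, so $F$ is a disjoint union of paths and cycles, and $v(F)\ge 2m+1$. For a path or cycle $X$ set $f(X)=\lfloor v(X)/m\rfloor$, the maximum number of pairwise-disjoint copies of $P_m$ inside $X$; since $2P_m\subseteq F$, summing over the components $C$ of $F$ we get $\sum f(C)\ge 2$. The plan is to produce a single vertex $v$ with $\sum f(C')\le 1$ over the components $C'$ of $F-v$, contradicting $2P_m\subseteq F-v$. I would split into two cases. If some component $C_0$ has $f(C_0)\ge 2$, then $v(C_0)\ge 2m$, and $e(F)\le 2m$ forces $C_0\in\{P_{2m},P_{2m+1},C_{2m}\}$ with at most one edge left elsewhere; deleting the $m$-th vertex of $C_0$ (any vertex if $C_0=C_{2m}$) leaves $P_m$-capacity exactly $1$ inside $C_0$, while an edge set of size at most $1$ carries no $P_m$. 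Otherwise every component has $f\le 1$, so at least two components contain a $P_m$; three such components would require $\ge 3(m-1)>2m$ edges (this is where $m\ge4$ enters), so there are exactly two, $C_1$ and $C_2$, each a path on $a_i$ vertices with $m\le a_i\le 2m-1$ or a cycle on $b_i$ vertices with $m\le b_i\le 2m-1$. If one of them, say $C_1$, is a path or equals $C_m$, then deleting the $m$-th vertex of $C_1$ (any vertex if $C_1=C_m$) breaks it into pieces on at most $m-1$ vertices, so $F-v$ retains only the single $P_m$ inside $C_2$, the at most two remaining edges carrying no $P_m$; the only other possibility, $C_1$ and $C_2$ both cycles on $\ge m+1$ vertices, is excluded since it needs $\ge 2(m+1)>2m$ edges. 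Every branch ends in a contradiction, so $\rhat(2K_2,2P_m)\ge 2m+1$.

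The main obstacle is the bookkeeping in the $m\ge 4$ lower bound: checking, configuration by configuration, that one vertex deletion really does drop the total $P_m$-capacity below $2$. The delicate point is the path-versus-cycle dichotomy for the two $P_m$-carrying components, and it is exactly there that the full strength of the hypothesis $e(F)\le 2m$ (rather than merely $v(F)\ge 2m+1$) is used.
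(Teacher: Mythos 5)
Your proof is correct, but it takes a genuinely different route from the paper's. The paper's lower-bound argument works by exhibiting explicit $(2K_2,2P_m)$-colorings: it reduces to the case where every component of $F$ contains a $P_m$, observes that $F$ then has at most two components, handles the connected case by quoting Theorem~\ref{r_c(2K_2,nP_m)}, and in the two-component case colors one component with a $(2K_2,P_m)$-coloring and the other entirely blue, using the bound $\rc(2K_2,P_m)=m+1$ to show that not both components can arrow $(2K_2,P_m)$ and an edge count to show the blue component has no $2P_m$. You instead run the contrapositive through Lemma~\ref{2K_2}: the condition $2P_m\subseteq F-v$ for all $v$ forces $\deg(v)\le e(F)-(2m-2)$, hence $\Delta(F)\le 2$ when $e(F)\le 2m$, so $F$ is a union of paths and cycles, and you then locate a single vertex whose deletion drops the total $P_m$-packing number below $2$. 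Both arguments are sound; the paper's is shorter and recycles Theorem~\ref{r_c(2K_2,nP_m)} (which is the stated point of that theorem), and its coloring template scales to Propositions~\ref{r(2K_2,3P_m)} and~\ref{r(2K_2,4P_m)}, whereas your degree-bound argument is self-contained modulo Lemma~\ref{2K_2} and gives more structural information (it classifies the near-extremal graphs as unions of paths and cycles), at the cost of case bookkeeping that would grow quickly for larger $n$.
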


\begin{proof}
Let $k=\mathrm{min}\{2m+1,3m-3\}$. We need to show that $\rhat(2K_2,2P_m)\ge k$, so suppose that $F$ is a graph of size less than $k$ (that is, $e(F)<k$). We show that $F\not\to (2K_2,2P_m)$ by defining a $(2K_2,2P_m)$-coloring on $F$. We can assume that every component of $F$ contains a $P_m$ since coloring the components of $F$ without a $P_m$ blue would still produce a $(2K_2,2P_m)$-coloring. So it follows that $F$ has at most two components since otherwise, $e(F)\ge 3(m-1)\ge k$. The case when $F$ has only one component is dealt via Theorem \ref{r_c(2K_2,nP_m)}, so we can assume that $F$ has exactly two components.

Let $F_1$ and $F_2$ be the components of $F$. We cannot have both $F_1\to (2K_2,P_m)$ and $F_2\to (2K_2,P_m)$ since this would imply that $e(F)\ge 2(m+1)\ge k$ by Theorem \ref{r_c(2K_2,nP_m)}. Thus, we assume that $F_1\not\to (2K_2,P_m)$. Color $F_1$ by a $(2K_2,P_m)$-coloring and $F_2$ blue. To prove that this is a $(2K_2,2P_m)$-coloring of $F$, we need to show that $F_2$ does not contain a $2P_m$. But if $F_2$ contains a $2P_m$, then $e(F_2)\ge 2(m-1)$, so $e(F)\ge 3(m-1)\ge k$. Hence, we have a $(2K_2,2P_m)$-coloring of $F$. Since $F$ is an arbitrary graph of size less than $k=\mathrm{min}\{2m+1,3m-3\}$, we have $\rhat(2K_2,2P_m)\ge \mathrm{min}\{2m+1,3m-3\}$, and thus the theorem holds.
\end{proof}

\begin{proposition}\label{r(2K_2,3P_m)}
For $m\ge 3$,\[\rhat(2K_2,3P_m)=\mathrm{min}\{3m+1,4m-4\}=\begin{cases} 4m-4, & m=3,4, \\ 3m+1, & m\ge 5.
\end{cases}\]
\end{proposition}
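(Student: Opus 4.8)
The bound $\rhat(2K_2,3P_m)\le\min\{3m+1,4m-4\}$ is exactly the $n=3$ instance of Theorem~\ref{r(2K_2,nP_m)}, so the work lies entirely in the matching lower bound, which I would establish by the strategy of Proposition~\ref{r(2K_2,2P_m)} with an extra layer of case analysis. Put $k=\min\{3m+1,4m-4\}$, so that $k\le 3m+1$ and $k\le 4m-4$; let $F$ be an arbitrary graph with $e(F)\le k-1$; I want a $(2K_2,3P_m)$-coloring of $F$. First, color every component of $F$ containing no $P_m$ entirely blue --- this creates neither a red $2K_2$ nor a blue $P_m$ --- so I may assume every component of $F$ contains a $P_m$ and hence has at least $m-1$ edges; since $4(m-1)\ge 4m-4\ge k>e(F)$, the graph $F$ then has at most three components. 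If $F$ is connected, then $e(F)\le k-1\le 3m<3m+1=\rc(2K_2,3P_m)$, so $F\not\to(2K_2,3P_m)$ follows at once from Theorem~\ref{r_c(2K_2,nP_m)}.

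Suppose $F=F_1\cup F_2$ has two components. If, say, $F_1\not\to(2K_2,P_m)$, I color $F_1$ with a $(2K_2,P_m)$-coloring and $F_2$ blue: there is no red $2K_2$, and since the blue part of $F_1$ has no $P_m$, a blue $3P_m$ would have to lie entirely within $F_2$, forcing $e(F_2)\ge 3(m-1)$, which contradicts $e(F_2)\le(k-1)-(m-1)\le 3m-4<3(m-1)$. The remaining subcase --- the one with no counterpart in Proposition~\ref{r(2K_2,2P_m)}, and the heart of the proof --- is that both $F_1\to(2K_2,P_m)$ and $F_2\to(2K_2,P_m)$. Then $e(F_1),e(F_2)\ge m+1$, since $\rc(2K_2,P_m)=m+1$; taking $F_2$ to be the smaller part, $e(F_2)\le\lfloor(k-1)/2\rfloor\le 2m-3<2(m-1)$, so $F_2$ contains no $2P_m$, while $e(F_1)\le(k-1)-(m+1)\le 2m-1<2m+1=\rc(2K_2,2P_m)$, so $F_1$ admits a $(2K_2,2P_m)$-coloring. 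Coloring $F_1$ by such a coloring and $F_2$ blue leaves no red $2K_2$, and the blue subgraph contains no $2P_m$ inside $F_1$ and none inside $F_2$, hence no blue $3P_m$. (For $m=3$ this subcase is empty, since it would require $4\le e(F_2)\le 3$.)

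Finally, if $F=F_1\cup F_2\cup F_3$ has three components, then not all three can arrow $(2K_2,P_m)$, for that would give $e(F)\ge 3(m+1)>3m\ge k-1$; so some $F_1$ fails to, and I color $F_1$ with a $(2K_2,P_m)$-coloring and $F_2,F_3$ blue. From $e(F_2)+e(F_3)\le(k-1)-(m-1)$ and $e(F_2),e(F_3)\ge m-1$ one obtains $e(F_2),e(F_3)\le(k-1)-2(m-1)\le 2m-3<2(m-1)$, so neither contains a $2P_m$; since the blue part of $F_1$ has no $P_m$, the blue subgraph supports at most two vertex-disjoint copies of $P_m$, so there is no blue $3P_m$. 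The only genuine obstacle in all of this is the two-component subcase in which both parts already arrow $(2K_2,P_m)$: it survives every naive edge count (unlike the analogous situation for $2P_m$), and is resolved only by noticing that the smaller part is then too small to hold a $2P_m$ while the larger part, still lying below $\rc(2K_2,2P_m)$, can be colored so as to avoid a red $2K_2$ and a blue $2P_m$ simultaneously. The rest is routine bookkeeping with the inequalities $k\le 3m+1$ and $k\le 4m-4$ and with the values $\rc(2K_2,P_m)=m+1$, $\rc(2K_2,2P_m)=2m+1$, $\rc(2K_2,3P_m)=3m+1$ supplied by Theorem~\ref{r_c(2K_2,nP_m)}.
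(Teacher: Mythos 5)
Your proof is correct and follows essentially the same strategy as the paper's: discard components without a $P_m$, bound the number of remaining components by edge counts, dispose of the connected case via Theorem \ref{r_c(2K_2,nP_m)}, and use that theorem again to force one component of a two-component $F$ to admit a $(2K_2,P_m)$- or $(2K_2,2P_m)$-coloring while the other is too small to hold the complementary blue structure. The only difference is organizational --- you split the two-component case on whether both components arrow $(2K_2,P_m)$, whereas the paper splits on whether some component arrows $(2K_2,2P_m)$ --- but the resulting colorings and edge counts are the same.
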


\begin{proof}
Let $k=\mathrm{min}\{3m+1,4m-4\}$. We need to show that $\rhat(2K_2,3P_m)\ge k$, so suppose that $F$ is a graph of size less than $k$ whose components each contain a $P_m$. We show that $F\not\to (2K_2,3P_m)$. Following the first paragraph of the proof of Proposition \ref{r(2K_2,2P_m)}, we only need to consider the cases where $F$ has two or three components. The case where $F$ has three components is treated similarly to the second paragraph of the proof of Proposition \ref{r(2K_2,2P_m)}. Now let $F_1$ and $F_2$ be the components of $F$. Consider the following two cases.

\textbf{Case 1.} One of the components, say $F_1$, satisfies $F_1\to (2K_2,2P_m)$. It follows that $F_2\not\to (2K_2,P_m)$ since otherwise, $e(F)\ge (2m+1)+(m+1)\ge k$ by Theorem \ref{r_c(2K_2,nP_m)}. Color $F_1$ blue and $F_2$ by a $(2K_2,P_m)$-coloring. Observe that $F_1$ cannot contain a $3P_m$ since otherwise, $e(F_1)\ge 3(m-1)$, implying that $e(F)\ge 4(m-1)\ge k$. Hence, this coloring is indeed a $(2K_2,3P_m)$-coloring of $F$.

\textbf{Case 2.} $F_1\not\to (2K_2,2P_m)$ and $F_2\not\to (2K_2,2P_m)$. It is not possible for both $F_1$ and $F_2$ to contain a $2P_m$ since this would imply that $e(F)\ge 4(m-1)\ge k$. We can then choose a component, say $F_1$, which does not contain a $2P_m$. Color $F_1$ blue and $F_2$ by a $(2K_2,2P_m)$-coloring. This produces a $(2K_2,3P_m)$-coloring of $F$.

Either way, $F$ admits a $(2K_2,3P_m)$-coloring. Since $F$ is an arbitrary graph of size less than $k=\mathrm{min}\{3m+1,4m-4\}$, we have $\rhat(2K_2,3P_m)\ge \mathrm{min}\{3m+1,4m-4\}$, and thus the theorem holds.
\end{proof}

\begin{proposition}\label{r(2K_2,4P_m)}
For $m\ge 3$,\[\rhat(2K_2,4P_m)=\mathrm{min}\{4m+1,5m-5\}=\begin{cases} 5m-5, & m=3,4,5, \\ 4m+1, & m\ge 6.
\end{cases}\]
\end{proposition}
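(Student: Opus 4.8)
The upper bound $\rhat(2K_2,4P_m)\le\min\{4m+1,5m-5\}$ is immediate from Theorem \ref{r(2K_2,nP_m)} with $n=4$, so everything reduces to the matching lower bound. Put $k=\min\{4m+1,5m-5\}$, let $F$ be an arbitrary graph with $e(F)<k$, and aim to exhibit a $(2K_2,4P_m)$-coloring of $F$. Exactly as in the proofs of Propositions \ref{r(2K_2,2P_m)} and \ref{r(2K_2,3P_m)}, coloring blue every component that contains no $P_m$ reduces us to the case in which every component of $F$ contains a $P_m$; then each component has at least $m-1$ edges, so $e(F)<k\le 5(m-1)$ forces $F$ to have at most four components. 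A single component is handled directly by Theorem \ref{r_c(2K_2,nP_m)}, since $\rc(2K_2,4P_m)=4m+1\ge k$.

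Two facts drive the remaining cases. First, a component $F_i$ is connected and contains a $P_m$, so if $F_i\to(2K_2,jP_m)$ then $e(F_i)\ge\rc(2K_2,jP_m)=jm+1$ by Theorem \ref{r_c(2K_2,nP_m)}, whereas if $F_i$ merely contains a $jP_m$ then $e(F_i)\ge jm-1$ because $F_i$ is connected with at least $jm$ vertices. Second, the maximum number $a_i$ of pairwise vertex-disjoint copies of $P_m$ inside a component is additive over a disjoint union, so a coloring that leaves certain components entirely blue and gives one further component $F_j$ a $(2K_2,tP_m)$-coloring has its blue part containing at most $\bigl(\sum\nolimits_{i\neq j}a_i\bigr)+(t-1)$ disjoint copies of $P_m$. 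The four-component case is then the easy one: not all four components can arrow $(2K_2,P_m)$ (that would give $e(F)\ge 4(m+1)\ge k$), so color one that does not by a $(2K_2,P_m)$-coloring and the other three blue; if any of those three contained a $2P_m$ we would get $e(F)\ge 2(m-1)+3(m-1)\ge k$, so the blue part holds only a $3P_m$.

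For two or three components one runs a short nested case analysis. With three components, split according to whether some $F_i$ arrows $(2K_2,2P_m)$, and failing that whether some $F_i$ contains a $2P_m$; with two components, do the same one level up, trying $(2K_2,3P_m)$ first and then descending. In every branch one designates a single component to receive a $(2K_2,tP_m)$-coloring with $t$ chosen so the blue part carries at most three disjoint copies of $P_m$, colors the remaining components blue, and rules out the bad alternatives (the designated component actually arrowing $(2K_2,tP_m)$, or a blue component carrying too many disjoint paths) by adding up the two edge estimates above to force $e(F)\ge k$. The main obstacle is simply keeping this bookkeeping straight: the delicate branches are those where a component arrows $(2K_2,3P_m)$, where one must use the sharp estimate $e(F_i)\ge 3m+1$ rather than $3m-1$, which is exactly what is needed once $4m+1$ is the smaller term of $k$, i.e.\ for $m\ge 6$; for $m\le 5$ the cruder bound $jm-1$ already suffices in every branch. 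Carrying this out shows every $F$ with $e(F)<k$ admits a $(2K_2,4P_m)$-coloring, so $\rhat(2K_2,4P_m)\ge k$, completing the proof.
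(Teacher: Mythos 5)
Your proposal is correct and follows essentially the same route as the paper: the upper bound comes from Theorem \ref{r(2K_2,nP_m)}, and the lower bound from reducing to components that each contain a $P_m$, bounding their number by $e(F)<k\le 5(m-1)$, and running a case analysis in which one component receives a $(2K_2,tP_m)$-coloring while the bad alternatives are excluded by combining the edge bound $jm+1$ from Theorem \ref{r_c(2K_2,nP_m)} with edge counts for components containing several disjoint paths. The paper organizes the two-component case slightly differently (first forcing $F_1\not\to(2K_2,2P_m)$ and $F_2\not\to(2K_2,3P_m)$, then splitting on whether $F_2$ contains a $3P_m$), but this is only a cosmetic difference from your nested branching, and your remark about where the sharp estimate $3m+1$ is indispensable corresponds exactly to the paper's inequality $(3m+1)+(m+1)\ge k$.
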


\begin{proof}
Let $k=\mathrm{min}\{4m+1,5m-5\}$. We need to show that $\rhat(2K_2,4P_m)\ge k$, so suppose that $F$ is a graph of size less than $k$ whose components each contain a $P_m$. We show that $F\not\to (2K_2,4P_m)$. Following the proof of Proposition \ref{r(2K_2,2P_m)}, we only need to consider the cases where $F$ has two to four components. The cases where $F$ has three or four components are treated similarly to Proposition \ref{r(2K_2,3P_m)} and Proposition \ref{r(2K_2,2P_m)}, respectively.

It remains to consider the case where $F$ has two components, say $F_1$ and $F_2$. We cannot have both $F_1\to (2K_2,2P_m)$ and $F_2\to (2K_2,2P_m)$ since this would imply that $e(F)\ge 2(2m+1)\ge k$ by Theorem \ref{r_c(2K_2,nP_m)}, so we assume that $F_1\not\to (2K_2,2P_m)$. However, we must still have $F_1\to (2K_2,P_m)$. It follows that $F_2\not\to (2K_2,3P_m)$ since otherwise, $e(F)\ge (3m+1)+(m+1)\ge k$ by Theorem \ref{r_c(2K_2,nP_m)}. Consider the following two cases.

\textbf{Case 1.} $F_2$ contains a $3P_m$. This implies that $F_1$ does not contain a $2P_m$. Color $F_1$ blue and $F_2$ by a $(2K_2,3P_m)$-coloring. This produces a $(2K_2,4P_m)$-coloring of $F.$

\textbf{Case 2.} $F_2$ does not contain a $3P_m$. Color $F_1$ by a $(2K_2,2P_m)$-coloring and $F_2$ blue. This produces a $(2K_2,4P_m)$-coloring of $F$.

In both cases, $F$ admits a $(2K_2,4P_m)$-coloring. Since $F$ is an arbitrary graph of size less than $k=\mathrm{min}\{4m+1,5m-5\}$, we have $\rhat(2K_2,4P_m)\ge \mathrm{min}\{4m+1,5m-5\}$, and thus the theorem holds.
\end{proof}

We are not able to obtain the exact value of $\rhat(2K_2,nP_m)$ in general. But based on the preceding results on small values of $n$, the following conjecture can be posed. 

\begin{conjecture}
For $n\ge 5$, $m\ge 3$,
\[\rhat(2K_2,nP_m)=\mathrm{min}\{nm+1,(n+1)(m-1)\}.\]
\end{conjecture}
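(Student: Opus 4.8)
The plan is to prove the conjecture by strong induction on $n$, with base cases $n\le 4$: $n=1$ is the known value $\rhat(2K_2,P_m)=m+1$, while $n=2,3,4$ are Propositions \ref{r(2K_2,2P_m)}, \ref{r(2K_2,3P_m)} and \ref{r(2K_2,4P_m)}. Write $g(t)=\min\{tm+1,(t+1)(m-1)\}$, so the target value is $g(n)$ and $g(t)=tm+\min\{1,m-1-t\}$. Theorem \ref{r(2K_2,nP_m)} already gives $\rhat(2K_2,nP_m)\le g(n)$, so it remains to prove $\rhat(2K_2,nP_m)\ge g(n)$; equivalently, every graph $F$ with $e(F)<g(n)$ should admit a $(2K_2,nP_m)$-coloring.

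Following the first paragraph of the proof of Proposition \ref{r(2K_2,2P_m)}, I would first reduce to the case where every component of $F$ contains a $P_m$, since coloring a $P_m$-free component entirely blue introduces no blue $P_m$ there. Let $F_1,\dots,F_p$ be the components and let $t_i\ge1$ be the maximum number of vertex-disjoint copies of $P_m$ contained in $F_i$. Since $e(F_i)\ge t_i(m-1)$ and $e(F)<g(n)\le (n+1)(m-1)$, we obtain $\sum_i t_i\le n$. If $\sum_i t_i\le n-1$, coloring all of $F$ blue already works: the maximum number of vertex-disjoint $P_m$'s in a disjoint union equals the sum of the per-component maxima, so there is no blue $nP_m$, and there are no red edges. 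If $\sum_i t_i=n$ and $p=1$, then $F$ is connected with $e(F)\le nm$, so $F\not\to(2K_2,nP_m)$ by Theorem \ref{r_c(2K_2,nP_m)}.

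The substantive case is $\sum_i t_i=n$ with $p\ge2$. Here the strategy is to keep a single component partly red and color every other component entirely blue. I would first show that some component $F_j$ satisfies $F_j\not\to(2K_2,t_jP_m)$. Granting this, color $F_j$ by a $(2K_2,t_jP_m)$-coloring and each $F_i$ with $i\ne j$ blue: all red edges then lie in the single component $F_j$ and contain no $2K_2$, while the blue subgraph contains at most $(t_j-1)+\sum_{i\ne j}t_i=n-1$ vertex-disjoint copies of $P_m$, giving a $(2K_2,nP_m)$-coloring of $F$. To establish the existence of such a $j$, suppose instead that $F_j\to(2K_2,t_jP_m)$ for all $j$. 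Since $p\ge2$ we have $t_j\le n-(p-1)<n$, so the induction hypothesis gives $e(F_j)\ge\rhat(2K_2,t_jP_m)=g(t_j)$ and hence $e(F)\ge\sum_j g(t_j)$. The elementary inequality
\[\min\{1,m-1-a\}+\min\{1,m-1-b\}\ \ge\ \min\{1,m-1-(a+b)\}+1\qquad(a,b\ge1,\ m\ge3),\]
which is checked by splitting on whether each of $a$, $b$ exceeds $m-2$, iterates to yield $\sum_j g(t_j)\ge g(n)+(p-1)\ge g(n)+1$, contradicting $e(F)<g(n)$.

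The step I expect to be the main obstacle is isolating and proving the superadditivity estimate $\sum_j g(t_j)\ge g(n)+(p-1)$: this is precisely what forces one of the components to fail to arrow its own $t_jP_m$, which in turn is what makes the "one component colored, the rest blue" construction succeed. Everything else — the bound $\sum_i t_i\le n$, the additivity of disjoint-$P_m$-packing numbers over components, and the check that the two explicit colorings avoid a blue $nP_m$ — should be routine, and the reduction to $P_m$-containing components parallels the small-$n$ propositions verbatim.
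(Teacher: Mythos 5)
The paper offers no proof of this statement --- it is posed explicitly as an open conjecture, with the authors remarking that they ``are not able to obtain the exact value of $\rhat(2K_2,nP_m)$ in general'' --- so there is no argument of theirs to compare yours against. Judged on its own, your argument looks correct to me, and I could not find a gap in it. The upper bound is Theorem \ref{r(2K_2,nP_m)}; your lower bound correctly reduces to components containing a $P_m$, uses $e(F_i)\ge t_i(m-1)$ to force $\sum_i t_i\le n$, disposes of $\sum_i t_i\le n-1$ by the all-blue coloring and of the connected case by Theorem \ref{r_c(2K_2,nP_m)}, and in the remaining case your superadditivity inequality for $g(t)=tm+\min\{1,m-1-t\}$ does check out in all three subcases (the only tight case being $a,b\le m-2$), yielding $\sum_j g(t_j)\ge g(n)+p-1$ and hence a component $F_j\not\to(2K_2,t_jP_m)$; the resulting coloring has all red edges confined to $F_j$ and blue $P_m$-packing number at most $(t_j-1)+\sum_{i\ne j}t_i=n-1$. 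This is a genuine systematization of what the paper does ad hoc in Propositions \ref{r(2K_2,2P_m)}--\ref{r(2K_2,4P_m)}: where those proofs branch on which components arrow which $kP_m$, your bookkeeping via the per-component packing numbers $t_i$ pins down the relevant split of $n$ at the outset and makes the case analysis uniform in $n$.

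One simplification worth noting: the step you flag as the main obstacle --- the superadditivity of $g$ and the induction it requires --- can be bypassed entirely. Each component $F_j$ is connected, so if $F_j\to(2K_2,t_jP_m)$ then Theorem \ref{r_c(2K_2,nP_m)} gives directly $e(F_j)\ge\rc(2K_2,t_jP_m)=t_jm+1$; if this held for all $j$ with $p\ge 2$, then $e(F)\ge\sum_j(t_jm+1)=nm+p\ge nm+2>g(n)$, a contradiction. This removes the need for strong induction and for the $\min\{1,m-1-t\}$ computation, leaving only the known values for $n\le 4$ as independently verified instances. I would encourage you to write this up carefully and have it checked, since if it survives scrutiny it resolves the conjecture rather than merely reproving a known result.
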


\section{$3K_2$ versus $P_m$}

We now turn to pairs of graphs in the form $(3K_2,P_m)$. First, we present the following lemma, which is similar in nature to Lemma \ref{2K_2}.

\begin{lemma}\label{3K_2}
Let $H$ be a graph and suppose $F$ does not contain a cycle of order $5$ or less. Then $F\to (3K_2,H)$ holds if and only if $H \subseteq F-\{u,v\}$ for every $u,v \in V(F)$.
\end{lemma}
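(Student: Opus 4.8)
The plan is to prove the two implications separately; the forward direction needs nothing but the matching structure of $3K_2$. Suppose $F\to(3K_2,H)$ and, for contradiction, that $H\not\subseteq F-\{u,v\}$ for some $u,v\in V(F)$. Color red every edge of $F$ incident to $u$ or to $v$, and color all remaining edges blue. Any matching among the red edges uses at most one edge at $u$ and at most one at $v$, hence has size at most $2$, so there is no red $3K_2$; meanwhile the blue subgraph is exactly $F-\{u,v\}$, which contains no copy of $H$. This $(3K_2,H)$-coloring contradicts $F\to(3K_2,H)$, so the forward implication holds (and the girth hypothesis is not used here).

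For the converse, assume $F$ has no cycle of order at most $5$ and that $H\subseteq F-\{u,v\}$ for all $u,v\in V(F)$. Fix an arbitrary red-blue coloring of $E(F)$ with no blue $H$; we must exhibit a red $3K_2$. Let $R$ be the spanning subgraph of $F$ consisting of the red edges; note $R$ also has no cycle of order at most $5$. If some two vertices $u,v$ covered all edges of $R$, then every edge of $F-\{u,v\}$ would be blue, and since $H\subseteq F-\{u,v\}$ this gives a blue $H$, a contradiction. Hence the edges of $R$ cannot be covered by two vertices. It therefore suffices to prove the purely structural claim: \emph{if a graph $R$ has no cycle of order at most $5$ and its edges cannot be covered by two vertices, then $R$ contains a matching of size $3$.}

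To prove this claim, take a maximum matching $\{a_1b_1,a_2b_2\}$ of $R$ and suppose, for contradiction, that it has size $2$; put $U=\{a_1,b_1,a_2,b_2\}$, so every edge of $R$ meets $U$. Because $R$ has girth at least $6$, the subgraph induced on the four vertices of $U$ is acyclic, hence consists of the two matching edges plus at most one further (``connecting'') edge joining $\{a_1,b_1\}$ to $\{a_2,b_2\}$. Moreover, for each $i$ the vertices $a_i$ and $b_i$ cannot both have a neighbor outside $U$: two distinct such neighbors would extend the matching to size $3$, while a common one would form a triangle with the edge $a_ib_i$. Relabel so that $b_1$ and $b_2$ have no neighbor outside $U$; then every edge of $R$ is incident to $a_1$ or $a_2$ or lies inside $U$. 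If the connecting edge (if present) is incident to $a_1$ or $a_2$, then $\{a_1,a_2\}$ covers $R$, a contradiction. Otherwise the connecting edge is $b_1b_2$, so $R[U]$ is the path $a_1b_1b_2a_2$; one then checks that an outside neighbor $p$ of $a_1$ and an outside neighbor $q$ of $a_2$ with $p\neq q$ give a matching of size $3$, that a common such neighbor produces the $5$-cycle $a_1pa_2b_2b_1a_1$, and that any outside neighbor of $a_1$ (or of $a_2$) has no other neighbor in $U$; these observations force all edges outside $U$ to hang off a single vertex and show that a suitable pair of two vertices covers $R$, again a contradiction. Hence the maximum matching has size at least $3$, which proves the claim and the lemma. (Alternatively one can run the Gallai--Edmonds structure theorem: girth at least $6$ forces every factor-critical component of $D(R)$ to be a single vertex, and $\nu(R)\le 2$ then leaves only a handful of configurations, each with a $2$-vertex cover.)

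The main obstacle is the structural claim, i.e., controlling the vertex cover number in terms of the matching number under the girth assumption. Within it, the delicate point is the subcase $R[U]\cong P_4$, where the two matching edges are linked by a middle edge: this is the one place where forbidding $5$-cycles (and not merely triangles and $4$-cycles) is essential, and it requires the most careful tracking of which endpoints of the matching may have neighbors outside $U$.
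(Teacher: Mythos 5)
Your proof is correct, and its overall skeleton matches the paper's: the forward direction via the ``color everything touching $u$ and $v$ red'' coloring (where, as you note, the girth hypothesis is not needed), and the converse by showing that a red subgraph with no $3K_2$ and no cycle of length at most $5$ has all its edges covered by two vertices. The only real divergence is in how that structural fact is established. The paper argues via the longest path in the red subgraph $F'$: since $P_6\supseteq 3K_2$, the longest path has at most $5$ vertices, and $F'$ is then classified as a union of at most two stars (with the cases $4\le k\le 5$ dismissed as ``easy to see''). You instead start from a maximum matching of size $2$, show its four endpoints induce at most a $P_4$, and run a case analysis on which endpoints can have outside neighbors; this is longer but makes fully explicit the one configuration (the induced $a_1b_1b_2a_2$ path with a common outside neighbor of $a_1$ and $a_2$) where excluding $C_5$, and not merely $C_3$ and $C_4$, is genuinely required --- a point the paper's terse ``union of two stars'' argument leaves implicit. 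One cosmetic remark: when you ``take a maximum matching $\{a_1b_1,a_2b_2\}$ and suppose, for contradiction, that it has size $2$,'' you should say instead that you assume the maximum matching has size at most $2$ and note that sizes $0$ and $1$ trivially give a cover by at most one vertex (using triangle-freeness for the size-$1$ case); as written the sentence reads as if size $2$ were the hypothesis being contradicted.
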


\begin{proof}
Suppose that $F-\{u,v\}$ does not contain $H$ for some $u,v \in V(F)$. Then by coloring the edges incident to at least one of $v$ and $w$ red, and coloring $F-\{u,v\}$ blue will produce a $(3K_2,H)$-coloring of $F$. This implies that $F\not\to (3K_2,H)$.

Conversely, suppose that $F$ admits a $(3K_2,H)$-coloring. Let $F'$ be the red subgraph of $F$ with respect to this coloring. We see that $F'$ contains neither a $3K_2$ nor a cycle of order $5$ or less. We claim that $F'$ is a union of at most two stars. If $F'$ is disconnected, then it must be a disjoint union of two stars, so assume that it is connected. Let $P_k$ be the longest path contained in $F'$. Since $F'$ does not contain a $3K_2$, we must have $k\le 5$ since $P_6$ contains a $3K_2$. If $k \le 3$, then $F'$ is easily shown to be a star. Similarly, it is also easy to see that $F'$ is a union of two stars when $4 \le k \le 5$.

We have just shown that $F'$ is a union of at most two stars. Therefore, there are vertices $u,v \in V(F')$ such that $F'-\{u,v\}$ is empty. In other words, $F-\{u,v\}$ is contained in the blue subgraph $F''$ of $F$. Since $H \nsubseteq F''$, we have that $H \nsubseteq F-\{u,v\}$, and the proof is complete.
\end{proof}

With the above lemma, we are able to give an arrowing graph of $3K_2$ and $P_m$. This gives us an upper bound for $\rc(3K_2,P_m)$.

\begin{proposition}\label{r_c(3K_2,P_m)}
For $m\ge 9$,
\[\rc(3K_2,P_m)\le \left\lceil\frac{3m+7}{2}\right\rceil.\]
\end{proposition}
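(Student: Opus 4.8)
The plan is to exhibit an explicit connected graph $F$ with $e(F)=\left\lceil\frac{3m+7}{2}\right\rceil$ and no cycle of order $5$ or less, and then invoke Lemma \ref{3K_2}: that lemma applies because of the girth condition, so it reduces the whole problem to verifying that $P_m\subseteq F-\{u,v\}$ for every pair $u,v\in V(F)$ (the case $u=v$ following from $u\ne v$). This immediately gives $F\to(3K_2,P_m)$ and hence the claimed bound on $\rc(3K_2,P_m)$.

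For the construction I would take a \emph{spine} path $v_1v_2\cdots v_N$ with $N$ a little larger than $m$, and attach a family of \emph{short chords}, each of the form $v_iv_{i+5}$, placed at roughly every other spine position (say at the odd positions). Two design constraints govern this: first, a chord of span $5$ together with the spine forms a $C_6$, and any two chords of span $5$ at spacing $\ge 2$ together with the spine span a cycle of order $\ge 6$, so that $F$ has no cycle of order $\le 5$; second, every "interior" spine vertex $v_j$ must be \emph{skipped over} by at least two of the chords, neither of which has $v_j$ as an endpoint. The value of $N$ and the precise chord set near the two ends of the spine are then tuned so that $e(F)$ comes out to exactly $\left\lceil\frac{3m+7}{2}\right\rceil$ (only the middle portion of the spine, positions roughly $N-m+1$ through $m$, actually needs chords, since a vertex near an end can simply be discarded without a detour).

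With $F$ fixed, the heart of the proof is checking $P_m\subseteq F-\{u,v\}$ for all $u,v$. Writing $u=v_j$, $v=v_k$ with $j<k$, one builds a long path by walking along the spine and, whenever the walk would reach a deleted vertex, jumping over it along a chord; each deleted vertex is skipped by at least two chords, and deleting the \emph{other} vertex can spoil at most one of these, so a usable detour survives at each. I would organize the cases by the relative position of $v_j$ and $v_k$: (a) both lie within a common length-$5$ window, so one chord detours around both and the path omits only $4$ spine vertices; (b) they lie far apart, so two disjoint detours are used and at most $8$ spine vertices are omitted; (c) one of them (or both) lies within a few steps of an end of the spine, so that end is simply truncated and at most one detour is needed. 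Because all chords run "left to right," the detours of case (b) can always be chosen so their routes are compatible, and in every case the path produced has at least $N-8\ge m$ vertices.

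I expect the main obstacle to be twofold. The combinatorial part is the bookkeeping in cases (b)–(c): one must confirm that for \emph{every} pair of deleted vertices — including pairs in which one or both vertices are themselves chord endpoints, or pairs straddling the "deep middle" of the spine — a simultaneously valid pair of detours exists, which is exactly where the "skipped by at least two chords" property is used and must be checked against all the awkward parity configurations of chord endpoints. The quantitative part is arranging the chord placement near the two ends precisely enough that the edge count lands on $\left\lceil\frac{3m+7}{2}\right\rceil$ rather than a slightly larger value; this is presumably also where the hypothesis $m\ge 9$ is needed, since for smaller $m$ the girth-$\ge 6$ requirement leaves too little room for the spine and its chords.
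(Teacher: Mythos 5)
Your overall framing is right: Lemma \ref{3K_2} does reduce the problem to exhibiting a connected graph $F$ of size $\left\lceil\frac{3m+7}{2}\right\rceil$, with no cycle of order $5$ or less, such that $P_m\subseteq F-\{u,v\}$ for all $u,v$. But the construction you propose does not close the argument, for two concrete reasons. First, the edge count does not land where you need it. In the worst case ($u$ and $v$ far-apart interior spine vertices) each detour through a span-$5$ chord omits $4$ spine vertices, so you need $N\ge m+8$ spine vertices, hence at least $m+7$ spine edges; and for every position $j$ with $5\le j\le N-4$ to be skipped by two chords of the form $v_iv_{i+5}$ you need chords at roughly every other position over a range of about $m$ positions, i.e.\ about $m/2$ chords. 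That totals roughly $\frac{3m+14}{2}$ edges, exceeding $\left\lceil\frac{3m+7}{2}\right\rceil$ by a constant that cannot be ``tuned'' away: lengthening the spine to permit end-truncation costs one spine edge per truncatable position while saving only half a chord, and thinning to one chord per vertex destroys the redundancy your detour argument relies on. Second, the case analysis you defer is exactly where the design is fragile: for $k=j+4$ no single span-$5$ chord skips both deleted vertices, the natural chord around $v_j$ may terminate at $v_k$, and the two surviving detours must have compatible, non-interleaved endpoints, which depends delicately on the parity of the chord positions. None of this is verified, so as it stands the proposal is a plan rather than a proof.

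The paper's proof avoids all of this with a cleaner reduction that you miss. It constructs $F$ (of exactly the stated size, one graph for odd $m$ and one for even $m$) so that $F-u$ contains a cycle $C$ of order at least $m+1$ for \emph{every single} vertex $u$. Since $C\to(2K_2,P_m)$, Lemma \ref{2K_2} gives $P_m\subseteq (F-u)-v=F-\{u,v\}$ immediately, and Lemma \ref{3K_2} then yields $F\to(3K_2,P_m)$. In other words, the two-vertex deletion condition is obtained for free from a one-vertex deletion condition (``every vertex-deleted subgraph contains a long cycle''), which is far easier to check and to achieve within the edge budget. If you want to salvage your approach, replace the chorded path by a graph with that single-deletion property.
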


\begin{figure}
\centering
\captionsetup{width=.35\textwidth}
\subfloat[The graph $F$ satisfies $F\to (3K_2,P_m)$ when $m \ge 9$ is odd.]{%
\includegraphics[width=.36\textwidth]{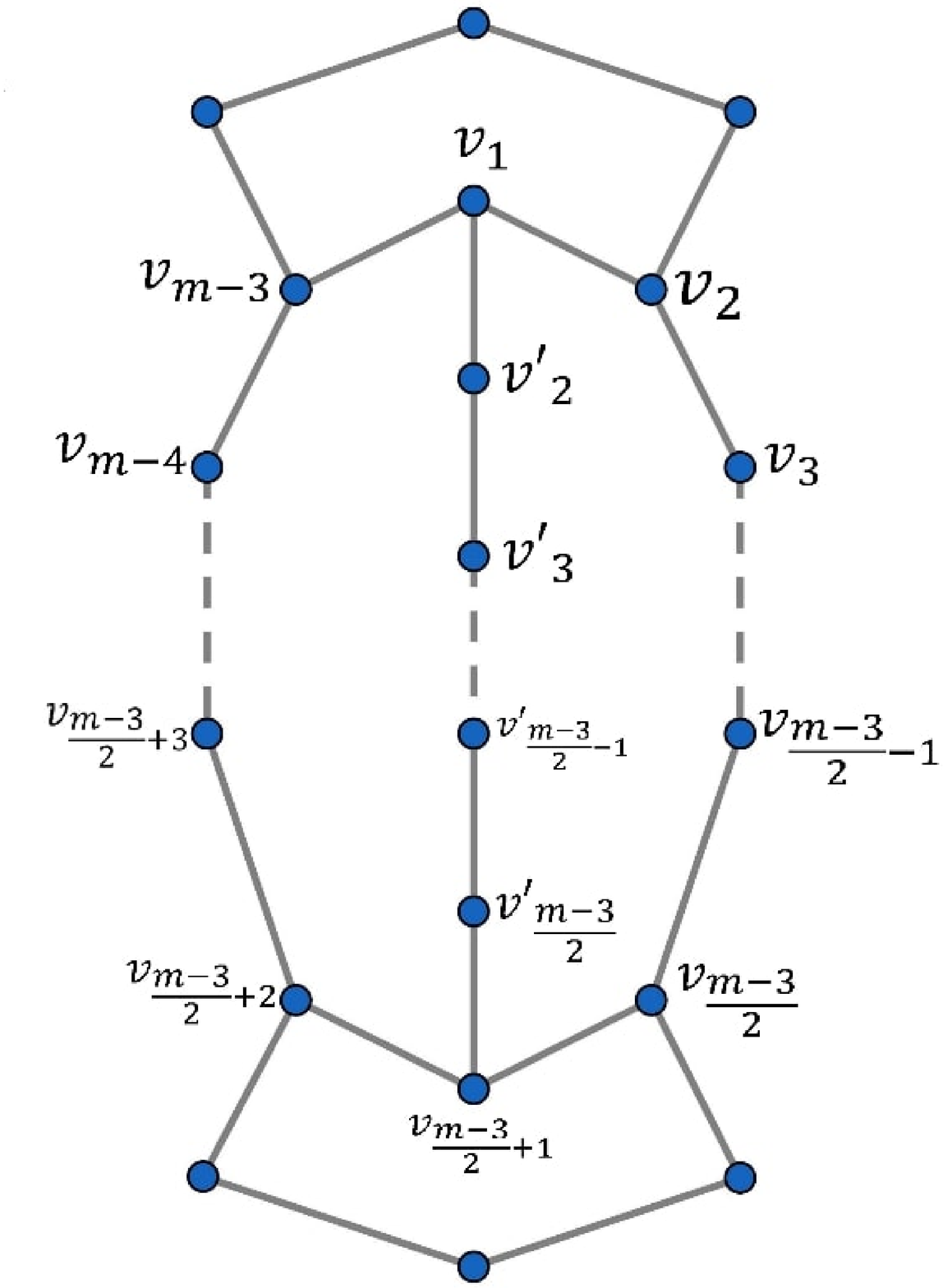}}\hspace{16pt}
\subfloat[The graph $G$ satisfies $G\to (3K_2,P_m)$ when $m \ge 10$ is even.]{%
\includegraphics[width=.36\textwidth]{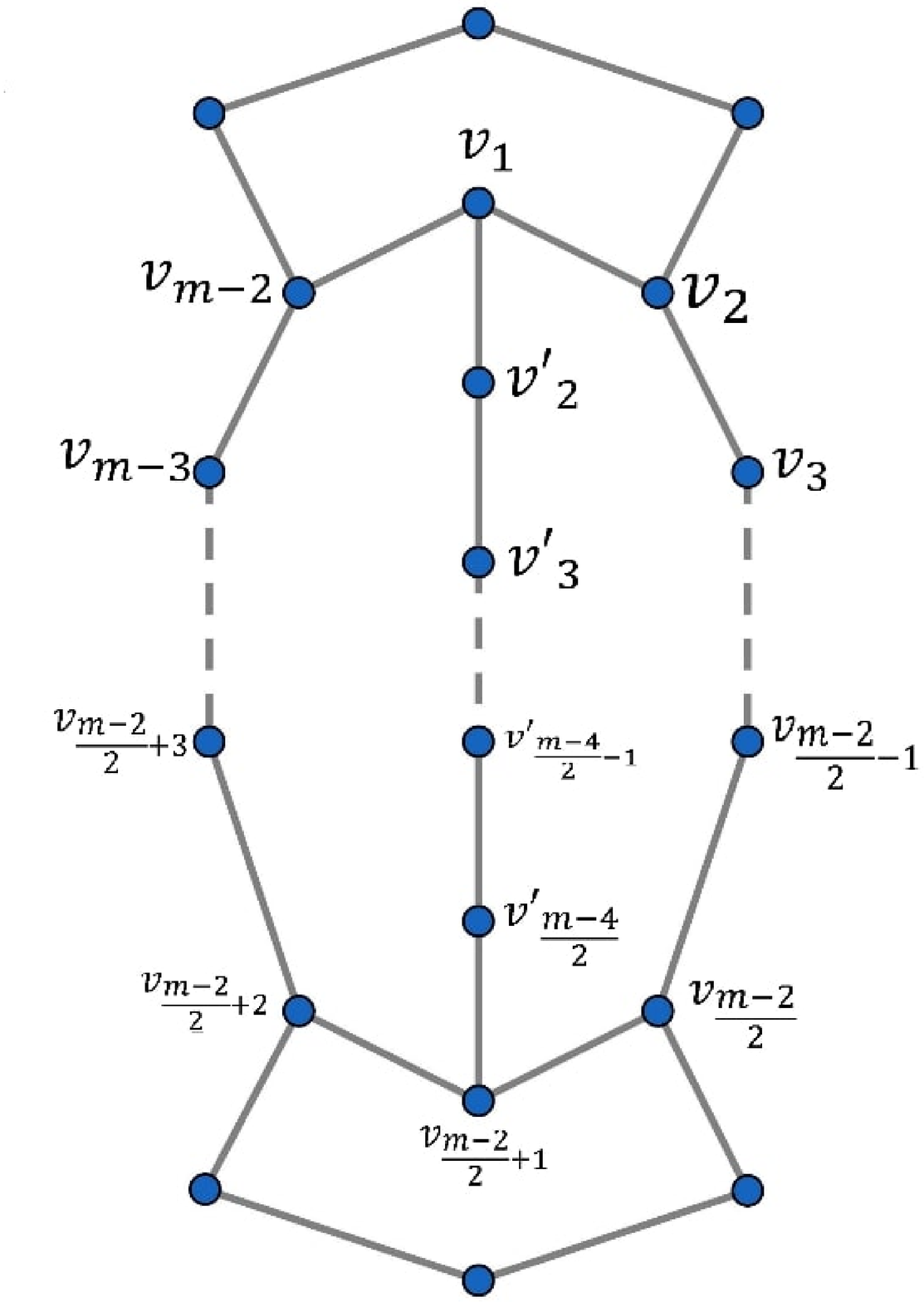}}
\captionsetup{width=.5\textwidth}
\caption{Arrowing graphs of $3K_2$ and $P_m$.} \label{Graph34}
\end{figure}

\begin{proof}
Suppose $m$ is odd. Let $F$ be the connected graph of size $\left\lceil\frac{3m+7}{2}\right\rceil$ shown in Figure \ref{Graph34}(A). Let $u,v \in V(F)$. It is not hard to verify that $F-u$ contains a cycle $C$ of order at least $m+1$ regardless of the vertex $u$ taken. Since $C\to (2K_2,P_m)$, we have $F-u\to (2K_2,P_m)$. By Lemma \ref{2K_2}, we see that $P_m \subseteq (F-u)-v = F-\{u,v\}$. Since $F$ contains no cycle of order $5$ or less, we can conclude from Lemma \ref{3K_2} that $F\to (3K_2,P_m)$, and thus $\rc(3K_2,P_m)\le \left\lceil\frac{3m+7}{2}\right\rceil$ when $m$ is odd.

Now suppose $m$ is even. Let $G$ be the connected graph of size $\left\lceil\frac{3m+7}{2}\right\rceil$ shown in Figure \ref{Graph34}(B). Since $G-u$ contains a cycle of order at least $m+1$ for every $u \in V(G)$, a repeat of the previous argument shows that $\rc(3K_2,P_m)\le \left\lceil\frac{3m+7}{2}\right\rceil$ when $m$ is even. The proof is then complete.
\end{proof}

Proposition \ref{r_c(3K_2,P_m)} can be applied to sharpen the bounds in Theorem \ref{r(tK_2,P_m)} when $t\ge 3$ is odd and $m$ is sufficiently large. This is done by constructing an arrowing graph of $tK_2$ and $P_m$ containing a connected arrowing graph $F\to (3K_2,P_m)$ of size $\left\lceil \frac{3m+7}{2}\right\rceil$ guaranteed to exist by Proposition \ref{r_c(3K_2,P_m)}.

\begin{theorem}\label{r(tK_2,P_m) sharp}
For odd $t\ge 3$ and $m\ge 9$,
\[\rhat(tK_2,P_m)\le \left\lceil\frac{3m+7}{2}\right\rceil+\frac{(t-3)(m+1)}{2}\]
and
\[\rc(tK_2,P_m)\le \left\lceil\frac{3m+7}{2}\right\rceil+\frac{(t-3)(m+2)}{2}.\]
\end{theorem}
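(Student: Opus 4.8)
The plan is to build an arrowing graph of $tK_2$ and $P_m$ by gluing together the connected arrowing graph of $3K_2$ and $P_m$ supplied by Proposition \ref{r_c(3K_2,P_m)} with several copies of $C_{m+1}$, recalling that $C_{m+1}\to(2K_2,P_m)$ and that $e(C_{m+1})=m+1$. Since $t$ is odd and at least $3$, I would write $t=3+2s$ with $s=(t-3)/2\ge 0$ an integer; morally, the summand $3K_2$ is absorbed by the Proposition \ref{r_c(3K_2,P_m)} graph, and the remaining $2s$ copies of $K_2$ are absorbed two at a time, one pair per copy of $C_{m+1}$.

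For the bound on $\rhat(tK_2,P_m)$, let $F_0$ be the connected graph with $F_0\to(3K_2,P_m)$ and $e(F_0)=\lceil (3m+7)/2\rceil$ guaranteed by Proposition \ref{r_c(3K_2,P_m)}, and form the disjoint union $F=F_0\cup sC_{m+1}$. Applying the disjoint-union fact recorded in Section~2 (if $F_1\to(G_1,H)$ and $F_2\to(G_2,H)$ then $F_1+F_2\to(G_1+G_2,H)$) and iterating $s$ times, using $C_{m+1}\to(2K_2,P_m)$ at each step, gives $F\to(3K_2+2sK_2,P_m)=(tK_2,P_m)$. Counting edges, $e(F)=\lceil (3m+7)/2\rceil+s(m+1)=\lceil (3m+7)/2\rceil+\tfrac{(t-3)(m+1)}{2}$, which is the first inequality.

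For the bound on $\rc(tK_2,P_m)$, the same blocks are used but must be assembled into a connected graph: attach each of the $s$ copies of $C_{m+1}$ to $F_0$ by a single new edge (a bridge), producing a connected graph $F'$ with $e(F')=\lceil (3m+7)/2\rceil+s(m+1)+s=\lceil (3m+7)/2\rceil+\tfrac{(t-3)(m+2)}{2}$. The point to check is that adding bridges preserves arrowing: given any red-blue coloring of $F'$, restrict it to $F_0$ to obtain either a blue $P_m$ (in which case we are done) or a red $3K_2$, and restrict it to each copy of $C_{m+1}$ to obtain either a blue $P_m$ (done) or a red $2K_2$; since $F_0$ and the $s$ copies occupy pairwise disjoint vertex sets (the bridges add no vertices), these red matchings are vertex-disjoint and their union is a red $tK_2$, so $F'\to(tK_2,P_m)$. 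This yields the second inequality. The degenerate case $s=0$ (that is, $t=3$) is just Proposition \ref{r_c(3K_2,P_m)} together with $\rhat(3K_2,P_m)\le\rc(3K_2,P_m)$.

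I do not expect a genuine obstacle: every ingredient is already in place, and the only thing requiring care is the bookkeeping that the red matchings extracted from the different blocks stay disjoint after gluing, which is immediate because gluing is done solely along fresh bridge edges. The one mildly delicate point is phrasing the bridge construction so that it costs exactly one extra edge per copy of $C_{m+1}$, which accounts for the $(m+2)$-versus-$(m+1)$ discrepancy between the two claimed bounds.
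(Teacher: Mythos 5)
Your proposal is correct and follows essentially the same route as the paper: take the connected graph from Proposition \ref{r_c(3K_2,P_m)}, adjoin $\frac{t-3}{2}$ disjoint copies of $C_{m+1}$ for the size Ramsey bound, and insert $\frac{t-3}{2}$ bridges for the connected version. Your explicit check that the bridges do not disturb the arrowing property (because any added edge only helps and the red matchings from the blocks remain vertex-disjoint) is a detail the paper dismisses with ``clearly,'' but the argument is the same.
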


\begin{proof}
By Proposition \ref{r_c(3K_2,P_m)}, there exists a connected graph $F$ of size $\left\lceil \frac{3m+7}{2}\right\rceil$ such that $F\to (3K_2,P_m)$. Let $G=F+\frac{t-3}{2}C_{m+1}$. Since $C_{m+1}\to (2K_2,P_m)$, we have that $\frac{t-3}{2}C_{m+1}\to ((t-3)K_2,P_m)$, and so $G\to (tK_2,P_m)$. This proves that $\rhat(tK_2,P_m)\le \left\lceil\frac{3m+7}{2}\right\rceil+\frac{(t-3)(m+1)}{2}$.

Let $H$ be the graph obtained by inserting $\frac{t-3}{2}$ bridges between the components of $G$ so that $H$ is connected. We clearly have $H\to (tK_2,P_m)$, which implies that $\rc(tK_2,P_m)\le \left\lceil\frac{3m+7}{2}\right\rceil +\frac{(t-3)(m+2)}{2}$.
\end{proof}

For odd $t \ge 3$, the upper bound for $\rhat(tK_2,P_m)$ (respectively, for $\rc(tK_2,P_m)$) obtained in Theorem \ref{r(tK_2,P_m) sharp} is $2m-\left\lceil\frac{3m+7}{2}\right\rceil$ smaller (respectively, $2m-\left\lceil\frac{3m+7}{2}\right\rceil+1$ smaller) than the upper bound found in Theorem \ref{r(tK_2,P_m)}. For values $m \ge 9$, this is a marked improvement.

\section{$2K_2$ versus $C_n$}

In this last section, we briefly discuss the pair of graphs $(2K_2,C_n)$. The following theorem provides an upper bound for $\rc(2K_2,C_n)$ by constructing graphs similar to the ones given in Figure \ref{Graph34}.

\begin{theorem}\label{r_c(2K_2,C_n)}
For $n \ge 6$,
\[\rc(2K_2,C_n) \le \begin{cases}
\frac{3n+4}{2}, & n \text{ even,} \\ \frac{3n+7}{2}, & n \text{ odd.}
\end{cases}\]
\end{theorem}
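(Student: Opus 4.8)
The plan is to prove the bound by exhibiting, for each $n\ge 6$, a connected graph $F$ of the stated size with $F\to(2K_2,C_n)$; the inequality $\rc(2K_2,C_n)\le e(F)$ then follows at once. To verify $F\to(2K_2,C_n)$ I will apply Lemma \ref{2K_2} with $H=C_n$. The graph $F$ will be built with no triangle, so condition (2) of Lemma \ref{2K_2} holds vacuously, and what remains is condition (1): $C_n\subseteq F-v$ for every $v\in V(F)$, i.e. the copies of $C_n$ in $F$ should have empty common vertex set.

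For the graph itself I would mimic the graphs $F$ and $G$ of Figure \ref{Graph34}, but rescaled so that the cycles one needs have length exactly $n$ (rather than order $\ge m+1$ as in Proposition \ref{r_c(3K_2,P_m)}), treating $n$ even and $n$ odd in separate cases just as in that proposition. Concretely, the core of $F$ is a cycle together with one long chord-path, so that $F$ contains two copies of $C_n$ overlapping in a long path; in such a $\theta$-shaped core every copy of $C_n$ runs through the two ``branch'' vertices of the chord-path, so near each of those vertices one attaches a small gadget of constantly many edges whose sole purpose is to create, in $F$ with that branch vertex deleted, a new copy of $C_n$. The length of the chord-path and the exact makeup of the gadgets --- which is where the parity of $n$ intervenes --- are arranged so that $e(F)$ equals $\frac{3n+4}{2}$ when $n$ is even and $\frac{3n+7}{2}$ when $n$ is odd.

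Verifying condition (1) of Lemma \ref{2K_2} is then a case analysis on where the deleted vertex $v$ sits. If $v$ lies on the part of one of the two overlapping $n$-cycles private to it, the other $n$-cycle is a $C_n$ missing $v$. If $v$ is interior to the shared chord-path, then the ``outer'' cycle formed from the two private arcs --- with a small adjustment in the odd case, where the shared path cannot have length exactly $n/2$ --- is a $C_n$ missing $v$. If $v$ is one of the two branch vertices, the gadget attached there provides a $C_n$ missing $v$. And if $v$ is internal to a gadget, one of the two original $n$-cycles survives intact. It then remains only to count $e(F)$ and to confirm that $F$ is connected and triangle-free.

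I expect the design of the gadgets to be the main difficulty. In Proposition \ref{r_c(3K_2,P_m)} it sufficed for $F-u$ to contain \emph{some} cycle of order at least $m+1$, because any such cycle $C$ already satisfies $C\to(2K_2,P_m)$; here $F-v$ must contain a cycle of length \emph{exactly} $n$, so a gadget must be calibrated to create a new cycle of precisely that length while spending only a bounded number of extra edges, and the calibration behaves differently for $n$ even and $n$ odd --- this is the source of the two formulas in the statement. A secondary point, to be watched throughout the construction, is that neither the overlap of the two $n$-cycles nor any gadget introduces a triangle, so that condition (2) of Lemma \ref{2K_2} stays vacuous.
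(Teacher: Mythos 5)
Your overall strategy is exactly the paper's: exhibit a connected graph of the stated size and verify $F\to(2K_2,C_n)$ via Lemma \ref{2K_2}. But the entire mathematical content of this theorem is the existence of graphs meeting these \emph{exact} edge counts, and your proposal never produces them --- you explicitly defer the design of the gadgets, which you yourself identify as the main difficulty. Worse, the blueprint you do sketch appears not to close at the claimed size. If every copy of $C_n$ in your $\theta$-shaped core is to pass through both branch vertices and the ``outer'' cycle of the two private arcs is also to be a $C_n$, the core is forced to be the balanced theta with three internally disjoint paths of length $n/2$ between the branch vertices $x$ and $y$, which already has $\frac{3n}{2}$ edges. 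Deleting $x$ from that core leaves a spider centered at $y$ with three legs of length $\frac{n}{2}-1$, whose diameter is $n-2$; adding a single edge to a tree creates exactly one cycle, of length at most diameter plus one, i.e.\ at most $n-1$. So each branch vertex needs at least two further edges before its deletion yields an $n$-cycle, and handling both $x$ and $y$ pushes you to at least $\frac{3n}{2}+4>\frac{3n+4}{2}$ in the even case. The actual construction must therefore be organized differently from the one you describe (for instance, with the relevant copies of $C_n$ not all routed through a common pair of branch vertices), and finding it is precisely the step you have omitted.

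A second, smaller divergence: you insist on keeping the graph triangle-free for both parities so that condition (2) of Lemma \ref{2K_2} is vacuous. The paper does this only for even $n$; its odd-$n$ graph contains exactly one triangle, and the proof checks condition (2) non-vacuously by verifying $C_n\subseteq G-C_3$. The gap of $\frac{3}{2}$ between the two formulas --- essentially the cost of one extra triangle's worth of edges --- is a hint that the triangle is a deliberate feature of the odd construction, used to fix the parity of the cycles available after a vertex deletion. By ruling it out a priori you may be making the odd case unattainable at size $\frac{3n+7}{2}$.
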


\begin{figure}
\centering
\captionsetup{width=.27\textwidth}
\subfloat[The graph $F$ satisfies $F\to (2K_2,C_n)$ when $n \ge 6$ is even.]{%
\includegraphics[width=.27\textwidth]{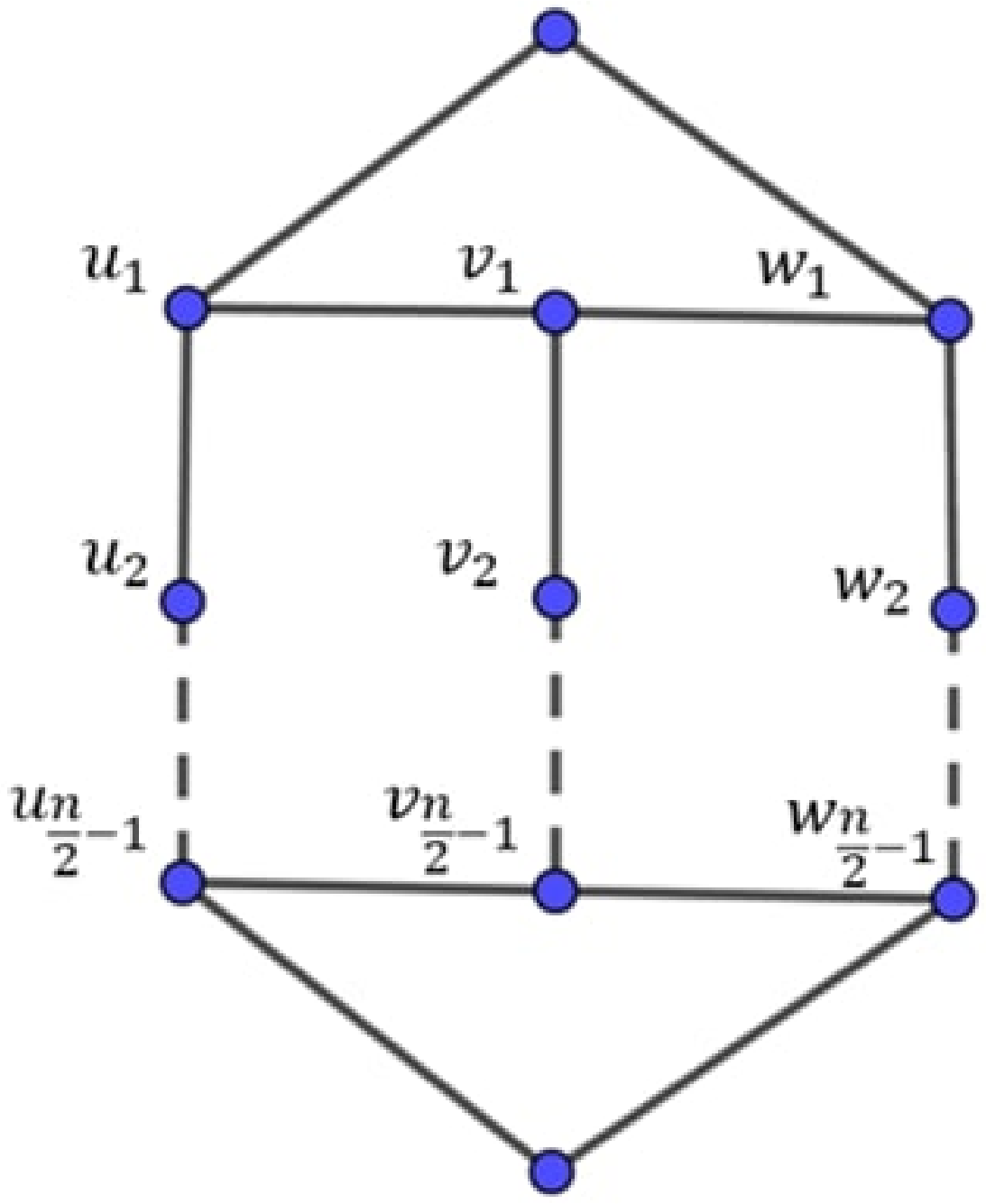}}\hspace{16pt}
\subfloat[The graph $G$ satisfies $G\to (2K_2,C_n)$ when $n \ge 7$ is odd.]{%
\includegraphics[width=.28\textwidth]{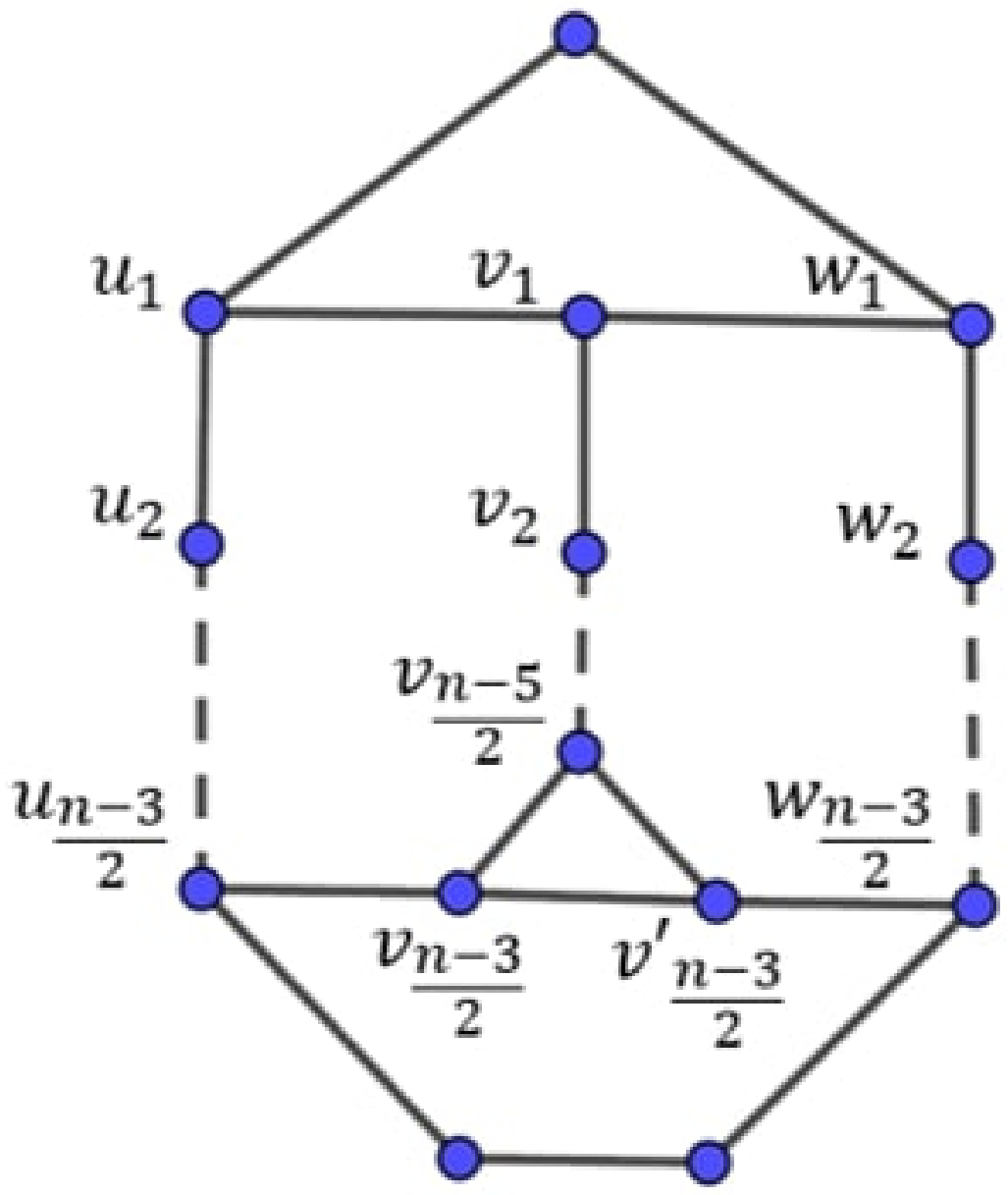}}
\captionsetup{width=.5\textwidth}
\caption{Arrowing graphs of $2K_2$ and $C_n$.} \label{Graph56}
\end{figure}

\begin{proof}
Suppose $n$ is even and let $F$ be the connected graph of size $\frac{3n+4}{2}$ shown in Figure \ref{Graph56}(A). We can verify that $C_n \subseteq F-v$ for every $v \in V(F)$. Since $F$ contains no triangle $C_3$, we have that $F\to (2K_2,C_n)$ by Lemma \ref{2K_2}. Therefore, $\rc(2K_2,C_n) \le \frac{3n+4}{2}$ when $n$ is even.

Suppose $n$ is odd and let $G$ be the connected graph of size $\frac{3n+7}{2}$ shown in Figure \ref{Graph56}(B). We can verify that $C_n \subseteq G-v$ for every $v \in V(G)$. We see that $G$ contains exactly one $C_3$, and it is easy to see that $C_n \subseteq G-C_3$. We thus have that $G\to (2K_2,C_n)$ by Lemma \ref{2K_2}, and that $\rc(2K_2,C_n) \le \frac{3n+7}{2}$ when $n$ is odd.
\end{proof}

Theorem \ref{r_c(2K_2,C_n)} refutes the claim made in \cite{Rah15} that $\rc(2K_2,C_n)=2n$, since it is able to provide a smaller upper bound than $2n$ for $\rc(2K_2,C_n)$, $n \ge 6$. Hence, we declare that the problem of finding the exact value of $\rc(2K_2,C_n)$ is, at present, still open.

\section{Concluding remarks}

We discussed the two types of size Ramsey numbers for the pair $(2K_2,nP_m)$. Furthermore, we managed to provide upper bounds for $\rc(3K_2,P_m)$ and $\rc(2K_2,C_n)$ when $m$ and $n$ are sufficiently large. In general, it seems to be difficult to obtain their exact values.

\begin{problem}
Find the exact values of $\rc(3K_2,P_m)$ and $\rc(2K_2,C_n)$ for $m,n\ge 3$.
\end{problem}

Working with the graph pair $(tK_2,nP_m)$ in its full generality also seems to be difficult. We invite future attempts at finding a good upper bound for $\rhat(tK_2,nP_m)$ and $\rc(tK_2,nP_m)$.

We have also explored the interplay between the original size Ramsey numbers and it connected variant. Theorem \ref{r_c(2K_2,nP_m)}, for example, which examines connected size Ramsey numbers, is used in the proofs of Propositions \ref{r(2K_2,2P_m)}--\ref{r(2K_2,4P_m)} on the exact values of size Ramsey numbers. Future research can be done to apply known results in the theory of connected size Ramsey numbers to problems regarding the more standard size Ramsey numbers.

\end{document}